\def\mathcal{\mathscr}
\newfont{\aaa}{cmb10 at 19pt}
\newfont{\bbb}{cmb10 at 11pt}
\newtheorem{lemma}{Lemma}[]
\newtheorem{theorem}{Theorem}
\newtheorem{corollary}{Corollary}[]
\newtheorem{definition}{Definition}[]
\def\v1{\vspace{1mm}}
\def\leq{\leqslant}
\def\geq{\geqslant}
\newcommand{\beq}{\begin{equation}}
\newcommand{\eeq}{\end{equation}}
\newcommand{\bey}{\begin{eqnarray}}
\newcommand{\eey}{\end{eqnarray}}
\newcommand{\beyy}{\begin{eqnarray*}}
\newcommand{\eeyy}{\end{eqnarray*}}
\def\@evenhead{
\vbox{\hbox to \textwidth {}{\hspace{0mm}{\footnotesize
\thepage}}{\hspace{9cm} {\footnotesize {Linlin Fu et al.}}}
\protect\vspace{1truemm}\relax \hrule depth0pt height0.15truemm
width\textwidth}}
\def\@evenfoot{}
\def\@oddhead{\vbox{\hbox to \textwidth
{{\hspace{0cm}{\footnotesize Stability of additive-quadratic functional equation}\hfill{\footnotesize
\thepage}}\hspace{0mm}}{} \protect\vspace{1truemm}\relax\hrule
depth0pt height0.15truemm width\textwidth}}
\def\@oddfoot{}
\begin{document}

\thispagestyle{empty}

\fancypagestyle{firststyle}
{
\renewcommand{\topmargin}{-9mm}
\fancyhead[lO,RE]{\footnotesize submit article \\
......\\[3mm]
}
\fancyhead[RO,LE]{\scriptsize \bf 
} \fancyfoot[CE,CO]{}}
\renewcommand{\headrulewidth}{0pt}
\setcounter{page}{1}
\qquad\\[5mm]

\thispagestyle{firststyle}

\noindent{\aaa{Ulam stability of an additive-quadratic functional equation in F-space and quasi-Banach spaces\\[2mm]

\noindent{\bbb Linlin Fu$^1,$\quad Qi Liu$^{1*},$\quad Yongjin Li$^1$}\\[-1mm]

\noindent\footnotesize{1\ \ Department of Mathematics,
Sun Yat-sen University, Guangzhou 510275, China\\
\\[6mm]
\vskip-2mm \noindent{\footnotesize\,} \vskip 4mm

\normalsize\noindent{\bbb Abstract}\quad By adopting the direct method and fixed point method, we prove that the Hyers-Ulam stability of the
following additive-quadratic functional equation
\begin{equation}\label{equ(0.1)}
	f(x+y, z+w)+f(x-y, z-w)-2 f(x, z)-2 f(x, w)=0
\end{equation}
in $\beta$-homogeneous $F$-spaces and quasi-Banach spaces.
There are some differences that we consider the target space with the $\beta$-homogeneous norm and quasi-norm. Overcoming the $\beta$-homogeneous norm and quasi-norm bottlenecks, we get some new results.

\vspace{0.3cm}

\footnotetext{ \\
\hspace*{5.8mm}Corresponding author: Qi Liu, E-mail:
 liuq325@mail2.sysu.edu.cn}

\noindent{\bbb Keywords}\quad Hyers-Ulam stability, fixed point method, functional equation, $F$-space, quasi-Banach space\\
{\bbb MSC}\quad 39B52, 47H10\\[0.4cm]

\noindent{\bbb 1\quad Introduction and preliminaries}\\[0.1cm]

The stability problem of functional equations originated from a question of Ulam \cite{SM} in 1940 , concerning the stability of group homomorphisms.

Let $\left(G_{1}, \cdot \right)$ be a group and let $\left(G_{2}, *\right)$ be a metric group with the metric $d(\cdot, \cdot) .$ Given $\delta>0,$ does there exist a $\varepsilon>0,$ such that if a mapping $h: G_{1} \rightarrow G_{2}$ satisfies the inequality $$d(h(x . y), h(x) *h(y))\leqslant\delta$$ for all $x, y \in G_{1},$ then there exists a homomorphism $H: G_{1} \rightarrow G_{2}$ with $$d(h(x), H(x))\leqslant\varepsilon$$ for all $x \in G_{1} ?$

In $1941,$ Hyers \cite{DH} gave the first affirmative answer to the question of Ulam for Banach spaces. Let $f: E \rightarrow E^{\prime}$ be a mapping between Banach spaces such that
$$
\|f(x+y)-f(x)-f(y)\| \leqslant \delta
$$
for all $x, y \in E,$ and for some $\delta>0 .$ Then there exists a unique additive mapping $T: E \rightarrow E^{\prime}$ such that
$$
\|f(x)-T(x)\| \leqslant \delta
$$
for all $x \in E .$ In 1978 , Rassias \cite{TH} proved the following theorem.

\begin{theorem}\label{theorem 1.1}

\rm{\cite{TH}} Let $f: E \rightarrow E^{\prime}$ be a mapping from a normed vector space $E$ into a Banach space $E^{\prime}$ subject to the inequality
\begin{equation}\label{equ(1.1)}
	\|f(x+y)-f(x)-f(y)\| \leqslant \varepsilon\left(\|x\|^{p}+\|y\|^{p}\right)
\end{equation}
for all $x, y \in E,$ where $\varepsilon$ and $p$ are constants with $\varepsilon>0$ and $p<1 .$ Then there exists a unique additive mapping $T: E \rightarrow E^{\prime}$ such that
\begin{equation}\label{equ(1.2)}
	\|f(x)-T(x)\| \leqslant \frac{2 \varepsilon}{2-2^{p}}\|x\|^{p}
\end{equation}
for all $x \in E .$ If $p<0$ then inequality (\ref{equ(1.1)}) holds for all $x, y \neq 0,$ and (\ref{equ(1.2)}) for $x \neq 0 .$ Also, if the function $t \mapsto f(t x)$ from $\mathbb{R}$ into $E^{\prime}$ is continuous in $t \in \mathbb{R}$ for each fixed $x \in E,$ then $T$ is $\mathbb{R}$-linear.
\end{theorem}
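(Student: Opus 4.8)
The plan is to adapt the direct method of Hyers to the inhomogeneous control term $\varepsilon(\|x\|^{p}+\|y\|^{p})$ appearing in (\ref{equ(1.1)}). First I would specialize $y=x$ in (\ref{equ(1.1)}), which yields $\|f(2x)-2f(x)\|\le 2\varepsilon\|x\|^{p}$, and after dividing by $2$,
\[
\left\|\frac{f(2x)}{2}-f(x)\right\|\le \varepsilon\|x\|^{p}.
\]
Replacing $x$ by $2^{k}x$ and dividing by $2^{k}$ gives $\|f(2^{k+1}x)/2^{k+1}-f(2^{k}x)/2^{k}\|\le \varepsilon\,2^{k(p-1)}\|x\|^{p}$. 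Summing this telescoping series over $k=0,\dots,n-1$ then produces
\[
\left\|\frac{f(2^{n}x)}{2^{n}}-f(x)\right\|\le \varepsilon\|x\|^{p}\sum_{k=0}^{n-1}2^{k(p-1)}.
\]

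Second, the hypothesis $p<1$ forces $2^{p-1}<1$, so the geometric series converges, with $\sum_{k=0}^{\infty}2^{k(p-1)}=\tfrac{1}{1-2^{p-1}}=\tfrac{2}{2-2^{p}}$. This is exactly where the restriction on $p$ is essential, and I expect it to be the main technical point: the same estimate applied between indices $m>n$ gives
\[
\left\|\frac{f(2^{m}x)}{2^{m}}-\frac{f(2^{n}x)}{2^{n}}\right\|\le \varepsilon\,2^{n(p-1)}\|x\|^{p}\sum_{k=0}^{m-n-1}2^{k(p-1)},
\]
whose right-hand side tends to $0$ as $n\to\infty$ because $2^{n(p-1)}\to 0$. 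Hence $\{f(2^{n}x)/2^{n}\}$ is Cauchy, and since $E'$ is complete I may define $T(x):=\lim_{n\to\infty}f(2^{n}x)/2^{n}$. Letting $n\to\infty$ in the telescoping estimate above then yields precisely the bound (\ref{equ(1.2)}).

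Third, to see that $T$ is additive I would replace $(x,y)$ by $(2^{n}x,2^{n}y)$ in (\ref{equ(1.1)}) and divide by $2^{n}$, obtaining
\[
\left\|\frac{f(2^{n}(x+y))}{2^{n}}-\frac{f(2^{n}x)}{2^{n}}-\frac{f(2^{n}y)}{2^{n}}\right\|\le \varepsilon\,2^{n(p-1)}\bigl(\|x\|^{p}+\|y\|^{p}\bigr),
\]
and the right-hand side again vanishes as $n\to\infty$, so $T(x+y)=T(x)+T(y)$. For uniqueness, if $S$ is another additive map satisfying (\ref{equ(1.2)}), then additivity gives $S(2^{n}x)=2^{n}S(x)$ and $T(2^{n}x)=2^{n}T(x)$, whence
\[
\|T(x)-S(x)\|=\frac{1}{2^{n}}\|T(2^{n}x)-S(2^{n}x)\|\le \frac{2}{2^{n}}\cdot\frac{2\varepsilon}{2-2^{p}}\|2^{n}x\|^{p}=\frac{4\varepsilon}{2-2^{p}}\,2^{n(p-1)}\|x\|^{p}\longrightarrow 0,
\]
forcing $S=T$.

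Finally, the $\mathbb{R}$-linearity addendum follows from the usual rigidity of additive maps: additivity already yields $T(qx)=qT(x)$ for every $q\in\mathbb{Q}$, and since $T(tx)=\lim_{n}2^{-n}f(2^{n}tx)$ inherits continuity in $t$ from the assumed continuity of $t\mapsto f(tx)$, a density argument upgrades $\mathbb{Q}$-homogeneity to $\mathbb{R}$-homogeneity. The only delicate ingredient in the whole argument is the convergence step, which is where $p<1$ is genuinely used; everything else is a routine passage to the limit.
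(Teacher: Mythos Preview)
The paper does not actually prove this theorem; it is quoted in the introduction as a background result attributed to Rassias \cite{TH}, with no argument supplied. Your proposal reproduces exactly the classical direct-method proof due to Rassias: specialize $y=x$, telescope the resulting estimate for $\|2^{-n}f(2^{n}x)-f(x)\|$, use $p<1$ to sum the geometric series and to obtain the Cauchy property, define $T$ as the limit, and pass to the limit in (\ref{equ(1.1)}) for additivity and in (\ref{equ(1.2)}) for uniqueness. So there is nothing in the paper to compare against, and your outline is the standard one.

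One small caution on the $\mathbb{R}$-linearity addendum: saying that $t\mapsto T(tx)=\lim_{n}2^{-n}f(2^{n}tx)$ ``inherits continuity'' from $t\mapsto f(tx)$ is not quite right, since a pointwise limit of continuous functions need not be continuous. The usual way to close this is to observe that $t\mapsto T(tx)$ is a pointwise limit of continuous functions, hence Borel measurable, and an additive map $\mathbb{R}\to E'$ that is measurable (or merely bounded on a set of positive measure) is automatically $\mathbb{R}$-linear. Alternatively, use the bound $\|T(tx)-f(tx)\|\le \tfrac{2\varepsilon}{2-2^{p}}|t|^{p}\|x\|^{p}$ together with the continuity of $f(tx)$ to see that $t\mapsto T(tx)$ is locally bounded, which already forces linearity of the additive map. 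Apart from this phrasing issue, your argument is complete.
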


Although stability problems have been studied successfully in the framework of Banach spaces, there are not many relevant results in $F$-spaces. One of the most important reasons is that the nonlinear structure of infinite-dimensional $F$-spaces and the failure of triangle inequality bring us challenges and difficulties. Besides these, for $F$-spaces, several results can be consulted in \cite{AF,KN} and the references therein. For more information about quasi-Banach spaces, the readers can refer to \cite{CW,RM,TA}.
Various more results for the stability of functional equations in quasi-Banach spaces can
be seen in  \cite{NVD2018,CP2008}.

Gil\'{a}nyi \cite{AG} showed that if $f$ satisfies the functional inequality
\begin{equation}\label{equ(1.3)}
	\|2 f(x)+2 f(y)-f(x-y)\| \leqslant\|f(x+y)\|
\end{equation}
then $f$ satisfies the Jordan-von Neumann functional equation
$$
2 f(x)+2 f(y)=f(x+y)+f(x-y).
$$
Fechner \cite{WF} and Gilányi \cite{AG} proved the Hyers-Ulam stability of the functional inequality (\ref{equ(1.3)}). The stability problems of functional equations and functional inequalities have been studied extensively by many authors (see \cite{IE,CP}).

Fixed point theory play an important role in functional analysis and other applied disciplines.
Next, we recall a fundamental result in fixed point theory.

\begin{theorem}\label{theorem 1.3}
\rm{
\cite{JD} Let $(X, d)$ be a complete generalized metric space and let $J: X \rightarrow X$ be a strictly contractive mapping with Lipschitz constant $\alpha<1 .$ Then for each given element $x \in X$, either
$$
d\left(J^{n} x, J^{n+1} x\right)=\infty
$$
for all nonnegative integers $n$ or there exists a positive integer $n_{0}$ such that

(1) $d\left(J^{n} x, J^{n+1} x\right)<\infty, \quad \forall n \geqslant n_{0}$;

(2) the sequence $\left\{J^{n} x\right\}$ converges to a fixed point $y^{*}$ of $J$;

(3) $y^{*}$ is the unique fixed point of $J$ in the set $Y=\left\{y \in X \mid d\left(J^{n_{0}} x, y\right)<\infty\right\}$;

(4) $d\left(y, y^{*}\right) \leqslant \frac{1}{1-\alpha} d(y, J y)$ for all $y \in Y$.}
\end{theorem}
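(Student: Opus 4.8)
The plan is to fix an arbitrary $x \in X$ and to study the orbit $\{J^n x\}$ through the scalar sequence $a_n := d(J^n x, J^{n+1} x)$. The contraction hypothesis $d(Ju, Jv) \leq \alpha\, d(u,v)$ immediately yields $a_{n+1} \leq \alpha\, a_n$, and hence $a_n \leq \alpha^{\,n-k} a_k$ whenever $a_k < \infty$. The stated dichotomy then reduces to a simple case split: either $a_n = \infty$ for every $n$, or the set $\{n : a_n < \infty\}$ is nonempty. In the latter case I would let $n_0$ be its least element, so that $a_{n_0} < \infty$ and consequently $a_n \leq \alpha^{\,n-n_0} a_{n_0} < \infty$ for all $n \geq n_0$, which is precisely conclusion~(1).

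Next I would show that the tail $\{J^n x\}_{n \geq n_0}$ is Cauchy. For $m > n \geq n_0$ the generalized triangle inequality gives
\[
d(J^n x, J^m x) \leq \sum_{k=n}^{m-1} a_k \leq a_{n_0}\sum_{k=n}^{\infty}\alpha^{\,k-n_0} = \frac{a_{n_0}\,\alpha^{\,n-n_0}}{1-\alpha},
\]
which tends to $0$ as $n \to \infty$. By completeness this sequence converges to some $y^* \in X$, establishing~(2). To see that $y^*$ is a fixed point I would estimate $d(y^*, Jy^*) \leq d(y^*, J^{n+1}x) + \alpha\, d(J^n x, y^*)$ and let $n \to \infty$, forcing $d(y^*, Jy^*) = 0$ and hence $Jy^* = y^*$.

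For (3) and (4) I would first note that $y^* \in Y := \{y : d(J^{n_0}x, y) < \infty\}$, since $d(J^{n_0}x, y^*) \leq \sum_{k \geq n_0} a_k < \infty$. If $z^*$ is any fixed point in $Y$, then $d(y^*, z^*) \leq d(y^*, J^{n_0}x) + d(J^{n_0}x, z^*) < \infty$, while the contraction property gives $d(y^*, z^*) = d(Jy^*, Jz^*) \leq \alpha\, d(y^*, z^*)$; finiteness together with $\alpha < 1$ forces $d(y^*, z^*) = 0$, that is, $y^* = z^*$. Finally, for $y \in Y$ with $d(y, Jy) < \infty$ (the inequality being trivial otherwise), $d(y, y^*) \leq d(y, Jy) + \alpha\, d(y, y^*)$, and the finiteness of $d(y, y^*)$ permits rearrangement into~(4).

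The only genuine subtlety, and the step I would watch most carefully, is the bookkeeping forced by the \emph{generalized} metric: distances may take the value $+\infty$, so every manipulation that moves a term across an inequality — most critically the rearrangements proving uniqueness in~(3) and the bound in~(4) — must be justified by first verifying that the relevant distance is finite. This is exactly the role played by restricting attention to the set $Y$ and to the tail beyond $n_0$, and keeping that finiteness explicit throughout is what makes the argument go through rather than any deep new idea.
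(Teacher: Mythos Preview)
Your argument is correct and is essentially the standard proof of the Diaz--Margolis fixed point alternative. Note, however, that the paper does not supply its own proof of this theorem: it is quoted verbatim as a known result from \cite{JD} and used as a black box in Section~3, so there is no in-paper proof to compare against. Your write-up could serve as a self-contained justification if one wished to include it.
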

By using the new fixed point method, the stability problems  of functional equations has been further studied extensively(see \cite{LCO,LCF,IE,CPF,VR}).

\begin{definition}\label{dt1}\rm{
	Consider $X$ be a linear space. A non-negative valued function $\|\cdot\|$ achieves
	an $F$-norm if satisfies the following conditions:

	{(1)} $\|x\|= 0$ if and only if $x=0$;

	{(2)} $\|\lambda x\|=\|x\|$ for all  $\lambda$,  $|\lambda|=1$;

	{(3)} $\|x+y\|\leq\|x\|+\|y\|$ for all $x,y\in X$;

	{(4)} $\|\lambda_{n}x\|\rightarrow 0$ provided  $\lambda_{n}\rightarrow 0$;

	{(5)} $\|\lambda x_{n}\|\rightarrow 0$ provided  $x_{n}\rightarrow 0$;

	{(6)} $\|\lambda_{n} x_{n}\|\rightarrow 0$ provided  $\lambda_{n}\rightarrow 0, x_{n}\rightarrow 0$.}

\end{definition}
{\it Then $(X,\|\cdot\|)$ is called an $F^\ast$-space. An F-space is a complete $F^\ast$-space}.

An $F$-norm  is called $\beta$-homogeneous $(\beta>0)$ if $\|tx\|=|t|^{\beta}\|x\|$ for all $x\in X$ and all $t\in \mathcal{C}$ (see \cite{RS,WA}).

If a quasi-norm  is $p$-subadditive, then it is called $p$-norm $(0 < p < 1)$.
In other words, if it satisfies
$$\Vert x+y\Vert^p\leq \Vert x\Vert^p+\Vert y\Vert^p,~~x,y\in X.$$
We note that the $p$-subadditive quasi-norm  $\Vert \cdot\Vert $ induces an $F$-norm.
We refer the reader to  \cite{KNP} and  \cite{AF} for background on it.

\begin{definition}\rm{\cite{NJNT}
	A quasi–norm on $\Vert \cdot\Vert$ on vector space $X$ over a field $K(
	\mathbb{R})$ is a map $X\longrightarrow[0, \infty)$ with the following properties:

	${\text { (1) }\|x\|=0 \text { if and only if } x=0}$

	${\text { (2) }\|a x\|=|a|\|x\|,~~ a \in \mathbb{R}, x \in X}$

	${\text { (3) }\|x+y\| \leq C(\|x\|+\|y\|),}~~ {x, y \in X}$

	where $C \geq 1$ is a constant independent of $x, y \in X$. The smallest $C$ for which
	$(3)$ holds in the definition is called the quasi-norm constant of $(X,\Vert \cdot\Vert)$.}
\end{definition}

It is vital to emphasize the well-known theorem in nonlocally convex theory, that is,
Aoki–Rolewicz theorem \cite{RS}, which asserts that for some $0 < p\leq 1$, every quasi-norm
admits  an equivalent $p$-norm.

 In this paper, we study the stability of the additive-quadratic functional equation (\ref{equ(0.1)}), which is closely related to the results by  Inho Hwang  and Choonkil Park  in 2020 \cite{IH}. There are some differences that we consider the target space with the $\beta$-homogeneous norm and quasi-norm. Overcoming the $\beta$-homogeneous norm and quasi-norm bottlenecks, we get some new results.

This paper is organized as follows: In Section 2, we prove the Hyers-Ulam stability of the additive-quadratic functional equation (\ref{equ(0.1)}) in $F$-spaces and quasi-Banach spaces by using the direct method. In Section $3,$ we prove the Hyers-Ulam stability of the additive-quadratic functional equation (\ref{equ(0.1)}) in $F$-spaces and quasi-Banach spaces by using the fixed point method.

\noindent \\[4mm]
\noindent{\bbb 2\quad Hyers-Ulam stability of the additive-quadratic functional equation (\ref{equ(0.1)}):	direct method}\\[0.1cm]

In this section,  we study the additive-quadratic functional equation (\ref{equ(0.1)}) in  $F$-spaces. The following lemma  plays a major role in our article.
\begin{lemma}\label{lemma 2.1}
	\rm{\cite{IH} Let $X,Y$ be vector spaces. If a mapping $f: X^{2} \rightarrow Y$ satisfies $f(0, z)=f(x, 0)=0$ and
	\begin{equation}\label{equ(2.1)}
			f(x+y, z+w)+f(x-y, z-w)-2 f(x, z)-2 f(x, w)=0
	\end{equation}
	for all $x, y, z, w \in X,$ then $f: X^{2} \rightarrow Y$ is additive in the first variable and quadratic in the second variable.}
\end{lemma}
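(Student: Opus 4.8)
The plan is to derive both halves of the conclusion directly from (\ref{equ(2.1)}) by specializing the two ``free'' variables $y$ and $w$, using the normalizations $f(0,z)=f(x,0)=0$ only where needed. First I would set $y=0$ in (\ref{equ(2.1)}). Since $f(x+0,\,\cdot\,)=f(x-0,\,\cdot\,)=f(x,\,\cdot\,)$, the equation collapses to $f(x,z+w)+f(x,z-w)=2f(x,z)+2f(x,w)$ for all $x,z,w\in X$. For each fixed $x$ this is exactly the quadratic (Jordan--von Neumann) functional equation in the second slot, so $z\mapsto f(x,z)$ is quadratic; this settles the ``quadratic in the second variable'' assertion with no further work, and one checks that $f(x,0)=0$ is consistent with it by taking $w=0$.

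For the additive part I would instead set $w=0$ in (\ref{equ(2.1)}) and use $f(x,0)=0$ to kill the resulting $2f(x,0)$ term, obtaining $f(x+y,z)+f(x-y,z)=2f(x,z)$ for all $x,y,z\in X$. Fixing $z$ and writing $g(x):=f(x,z)$, this is the Jensen-type identity $g(x+y)+g(x-y)=2g(x)$ together with $g(0)=f(0,z)=0$. To convert it into Cauchy additivity I would change variables via $a=x+y$, $b=x-y$; since $X$ is a vector space one has $x=\tfrac{a+b}{2}$ and $y=\tfrac{a-b}{2}$, so the identity becomes $g(a)+g(b)=2g\bigl(\tfrac{a+b}{2}\bigr)$ for all $a,b\in X$. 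Putting $b=0$ and using $g(0)=0$ gives the halving relation $g(a)=2g(a/2)$; applying it with $a$ replaced by $a+b$ yields $2g\bigl(\tfrac{a+b}{2}\bigr)=g(a+b)$, and hence $g(a+b)=g(a)+g(b)$. Thus $x\mapsto f(x,z)$ is additive for every fixed $z$, completing the proof.

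The individual computations are elementary substitutions, so the only point requiring any care is the passage from the Jensen equation to genuine additivity: both the solvability of $a=x+y,\ b=x-y$ and the halving step $g(a/2)=\tfrac12 g(a)$ rely on $2$ being invertible among the scalars of $X$, which holds for the $\mathbb{R}$- (or $\mathbb{C}$-) vector spaces under consideration. I expect this to be the main, albeit mild, obstacle. I would also make explicit that the two normalization hypotheses play distinct roles, namely $f(x,0)=0$ is used to remove the spurious term after setting $w=0$, while $f(0,z)=0$ is precisely what forces the Jensen map to be additive rather than merely affine.
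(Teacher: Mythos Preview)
Your argument is correct. The paper does not supply a proof of this lemma at all; it simply quotes the result from \cite{IH}, so there is nothing to compare against beyond noting that your derivation is the standard one: specialize $y=0$ to obtain the quadratic identity in the second variable, specialize $w=0$ together with $f(x,0)=0$ to obtain the Jensen identity in the first variable, and then use $f(0,z)=0$ to pass from Jensen to Cauchy additivity. Your remark that the substitution $a=x+y$, $b=x-y$ and the halving step require $2$ to be invertible in the scalar field is the only genuine caveat, and it is satisfied in the real or complex setting implicitly assumed throughout the paper.
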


Note that if $f: X \rightarrow Y$ satisfies $(\ref{equ(2.1)}),$ then the mapping $f: X \rightarrow Y$ is called an \emph{additive-quadratic} mapping.

\begin{theorem}\label{theorem 2.2}
	Let $X,Y$ be $\beta$-homogeneous $F$-spaces and $\varphi: X^{2} \rightarrow[0, \infty)$ be a function satisfying
	\begin{equation}\label{equ(2.2)}
		\Phi(x, y):=\sum_{j=1}^{\infty} 4^{(j-1)\beta} \varphi\left(\frac{x}{2^j}, \frac{y}{2^{j}}\right)<\infty
	\end{equation}
	for all $x, y \in X$ and $f: X^{2} \rightarrow Y$ be a mapping satisfying $f(x, 0)=f(0, z)=0$ and
	\begin{equation}\label{equ(2.3)}
		\|f(x+y, z+w)+f(x-y, z-w)-2 f(x, z)-2 f(x, w)\| \leqslant \varphi(x, y) \varphi(z, w)
	\end{equation}
	for all $x, y, z, w \in X .$ Then there exists a unique additive-quadratic mapping $F: X^{2} \rightarrow Y$ such that
	$$
	\|f(x, z)-F(x, z)\| \leqslant \min \left\{\Psi(x, x) \varphi(z, 0), \varphi(x, 0) \Phi(z, z)\right\}
	$$
	for all $x, z \in X,$ where
	$$
	\Psi(x, y):=\sum_{j=1}^{\infty} 2^{(j-1)\beta} \varphi\left(\frac{x}{2^{j}}, \frac{y}{2^{j}}\right)
	$$
	for all $x, y \in X$.
\end{theorem}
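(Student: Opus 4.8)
The plan is to run the classical Hyers--Ulam direct method twice, once in each slot, starting from two one-variable specializations of (\ref{equ(2.3)}). First I would feed special arguments into (\ref{equ(2.3)}): putting $w=0$ and then $y=x$, and using $f(x,0)=f(0,z)=0$, collapses the left-hand side to the \emph{additive} estimate $\|f(2x,z)-2f(x,z)\|\le\varphi(x,x)\varphi(z,0)$; symmetrically, putting $y=0$ and then $w=z$ gives the \emph{quadratic} estimate $\|f(x,2z)-4f(x,z)\|\le\varphi(x,0)\varphi(z,z)$. These isolate the additive behaviour in the first variable and the quadratic behaviour in the second, and they are exactly what produce the exponents $2^{(j-1)\beta}$ in $\Psi$ and $4^{(j-1)\beta}$ in $\Phi$.

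Next I would build the approximant from the additive estimate. Replacing $x$ by $x/2^{j}$ and multiplying inside the norm by $2^{j-1}$, the $\beta$-homogeneity turns the constant into $2^{(j-1)\beta}$, giving $\|2^{j-1}f(x/2^{j-1},z)-2^{j}f(x/2^{j},z)\|\le 2^{(j-1)\beta}\varphi(x/2^{j},x/2^{j})\varphi(z,0)$. Telescoping, together with $\Psi\le\Phi<\infty$ (which holds termwise since $4^{(j-1)\beta}\ge 2^{(j-1)\beta}$), shows $\{2^{n}f(x/2^{n},z)\}_{n}$ is Cauchy; completeness of $Y$ lets me set $F(x,z):=\lim_{n}2^{n}f(x/2^{n},z)$, and letting $n\to\infty$ in the telescoped sum yields $\|f(x,z)-F(x,z)\|\le\Psi(x,x)\varphi(z,0)$. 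An entirely parallel iteration of the quadratic estimate, scaling the second slot by $4^{n}$, produces $G(x,z):=\lim_{n}4^{n}f(x,z/2^{n})$ with $\|f(x,z)-G(x,z)\|\le\varphi(x,0)\Phi(z,z)$.

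To identify these limits as additive--quadratic maps, I would replace $(x,y)$ by $(x/2^{n},y/2^{n})$ in (\ref{equ(2.3)}) and scale by $2^{n}$ (respectively replace $(z,w)$ by $(z/2^{n},w/2^{n})$ and scale by $4^{n}$); the right-hand sides become $2^{n\beta}\varphi(x/2^{n},y/2^{n})\varphi(z,w)$ and $4^{n\beta}\varphi(z/2^{n},w/2^{n})\varphi(x,y)$, each tending to $0$ because the general term of a convergent series (here $\Psi$ and $\Phi$) vanishes. Hence $F$ and $G$ satisfy (\ref{equ(2.1)}) and vanish on the axes, so Lemma \ref{lemma 2.1} makes both additive in the first variable and quadratic in the second. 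In particular $F$ is additive in the first slot, so $(F-G)(x,z)=2^{n}(F-G)(x/2^{n},z)$ and therefore $\|(F-G)(x,z)\|=2^{n\beta}\|(F-G)(x/2^{n},z)\|\le \varphi(z,0)\,2^{n\beta}\Psi(x/2^{n},x/2^{n})+\Phi(z,z)\,2^{n\beta}\varphi(x/2^{n},0)$; the first factor is a tail of $\Psi(x,x)$ and the second a general term of $\Psi(x,0)$, so both vanish, giving $F=G$. Thus one additive--quadratic map meets both bounds, which is precisely the asserted $\min$. The same contraction estimate applied to a competing solution $F'$, with $F-F'$ additive in the first slot and controlled by twice the first bound, forces $F=F'$ and settles uniqueness.

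The telescoping and the homogeneity bookkeeping are routine; the step needing real care is the reconciliation of the two constructions, i.e.\ proving $F=G$, since the two approximants are obtained by scaling \emph{different} variables with \emph{different} exponents ($2^{n}$ versus $4^{n}$). The argument rests on the observation that $\Phi<\infty$ forces not only $\Psi<\infty$ but also the decay of the mixed term $2^{n\beta}\varphi(x/2^{n},0)$. This is the point at which the $\beta$-homogeneity of the $F$-norm, rather than the triangle inequality, does the essential work, so I expect it to be the main obstacle.
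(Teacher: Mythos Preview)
Your proposal is correct and follows essentially the same route as the paper: the same two specializations of (\ref{equ(2.3)}), the same two telescoped Cauchy sequences $2^{n}f(x/2^{n},z)$ and $4^{n}f(x,z/2^{n})$, Lemma~\ref{lemma 2.1} to identify the limits as additive--quadratic, and the same decay $2^{n\beta}\varphi(x/2^{n},0)\to 0$ (a general term of the convergent series $\Psi(x,0)\le\Phi(x,0)$) to force $F=G$. One small wording point: when you write ``In particular $F$ is additive in the first slot, so $(F-G)(x,z)=2^{n}(F-G)(x/2^{n},z)$'', you are using additivity of \emph{both} $F$ and $G$ in the first slot; you already have this, but say so explicitly.
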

\begin{proof}
		${\bf  Step ~1}$
	Setting  $w=0$ and  $y=x$
	in $(\ref{equ(2.3)}),$ we can obtain
	\begin{equation}\label{equ(2.4)}
		\|f(2 x, z)-2 f(x, z)\| \leqslant \varphi(x, x) \varphi(z, 0).
	\end{equation}
	This means that
	$$
	\left\|f(x, z)-2 f\left(\frac{x}{2}, z\right)\right\| \leqslant \varphi\left(\frac{x}{2}, \frac{x}{2}\right) \varphi(z, 0)
	$$
	for all $x, z \in X .$ Hence
	\begin{align}
		\left\|2^{l} f\left(\frac{x}{2^{l}}, z\right)-2^{m} f\left(\frac{x}{2^{m}}, z\right)\right\|
		& \leqslant \sum_{j=l}^{m-1}2^{j\beta}\left\| f\left(\frac{x}{2^{j}}, z\right)-2 f\left(\frac{x}{2^{j+1}}, z\right)\right\| \label{equ(2.5)} \\
		& \leqslant \sum_{j=l}^{m-1} 2^{j\beta}\varphi\left(\frac{x}{2^{j+1}}, \frac{x}{2^{j+1}}\right) \varphi(z, 0)   \notag
	\end{align}
	for all nonnegative integers $m$ and $l$ with $m>l$ and all $x, z \in X .$

Applying (\ref{equ(2.5)}), we can deduce that the sequence $\left\{2^{k} f\left(\frac{x}{2^{k}}, z\right)\right\}$ is Cauchy for all $x, z \in X .$ Since $Y$ is a $F$-space, the sequence $\left\{2^{k} f\left(\frac{x}{2^{k}}, z\right)\right\}$ converges.  Thus, we  can define the mapping $P: X^{2} \rightarrow Y$ by
	$$
	P(x, z):=\lim _{k \rightarrow \infty} 2^{k} f\left(\frac{x}{2^{k}}, z\right)
	$$
	for all $x, z \in X$. Moreover, letting $l=0$ and passing the limit $m \rightarrow \infty$ in $(\ref{equ(2.5)}),$ we get
	\begin{equation}\label{equ(2.6)}
		\|f(x, z)-P(x, z)\| \leqslant \Psi(x, x) \varphi(z, 0)
	\end{equation}
	for all $x, z \in X$.

	On the other hand,
	it follows from (\ref{equ(2.2)}) and (\ref{equ(2.3)}) that
	$$
	\begin{aligned}
	&\|P(x+y, z+w)+P(x-y, z-w)-2 P(x, z)-2 P(x, w)\| \\
	=& \lim _{n \rightarrow \infty}\left\|2^{n}\left(f\left(\frac{x+y}{2^{n}}, z+w\right)+f\left(\frac{x-y}{2^{n}}, z-w\right)
	-2 f\left(\frac{x}{2^{n}}, z\right)-2 f\left(\frac{x}{2^{n}}, w\right)\right)\right\| \\
	\leqslant & \lim _{n \rightarrow \infty} 2^{n\beta} \varphi\left(\frac{x}{2^{n}}, \frac{y}{2^{n}}\right) \varphi(z, w)=0
	\end{aligned}
	$$
	for all $x, y, z, w \in X .$

	Then we can deduce that
	$$
	P(x+y, z+w)+P(x-y, z-w)-2 P(x, z)-2 P(x, w)=0
	$$
	for all $x, y, z, w \in X .$ By Lemma \ref{lemma 2.1}, the mapping $P: X^{2} \rightarrow Y$ is additive in the first variable and quadratic in second variable.

		${\bf  Step ~2}$
	Now, let $T: X^{2} \rightarrow Y$ be another additive-quadratic mapping satisfying $(\ref{equ(2.6)}) .$ Then we have
	$$
	\begin{aligned}
	\|P(x, z)-T(x, z)\| &=\left\|2^{q} P\left(\frac{x}{2^{q}}, z\right)-2^{q} T\left(\frac{x}{2^{q}}, z\right)\right\| \\
	& \leqslant\left\|2^{q} P\left(\frac{x}{2^{q}}, z\right)-2^{q} f\left(\frac{x}{2^{q}}, z\right)\right\|+\left\|2^{q} T\left(\frac{x}{2^{q}}, z\right)-2^{q} f\left(\frac{x}{2^{q}}, z\right)\right\| \\
	& \leqslant 2^{q\beta+1} \Psi\left(\frac{x}{2^{q}}, \frac{x}{2^{q}}\right) \varphi(z, 0) \\
	& =2\sum_{j=q}^{\infty} 2^{j\beta} \varphi(z, 0) \varphi\left(\frac{x}{2^{j+1}}  , \frac{x}{2^{j+1}}\right),
	\end{aligned}
	$$
	which tends to zero as $q \rightarrow \infty$ for all $x, z \in X$.
	Hence,
	$P(x, z)=T(x, z)$ for all $x, z \in X $.

	${\bf  Step ~3}$
Letting $y=0$ and $w=z$ in $(\ref{equ(2.3)}),$ we get
	\begin{equation}\label{equ(2.7)}
		\|f(x, 2 z)-4 f(x, z)\| \leqslant \varphi(x, 0) \varphi(z, z)
	\end{equation}
	and so
	$$
	\left\|f(x, z)-4 f\left(x, \frac{z}{2}\right)\right\| \leqslant \varphi(x, 0) \varphi\left(\frac{z}{2}, \frac{z}{2}\right)
	$$
	for all $x, z \in X .$ Hence
	\begin{align}
	\left\|4^{l} f\left(x, \frac{z}{2^{l}}\right)-4^{m} f\left(x, \frac{z}{2^{m}}\right)\right\| & \leqslant \sum_{j=l}^{m-1}  4^{j\beta}  \left\| f\left(x, \frac{z}{2^{j}}\right)-4 f\left(x, \frac{z}{2^{j+1}}\right)\right\| \label{equ(2.8)}\\
	& \leqslant \sum_{j=l}^{m-1} 4^{j\beta} \varphi(x, 0) \varphi\left(\frac{z}{2^{j+1}}, \frac{z}{2^{j+1}}\right)  \notag
	\end{align}
	for all nonnegative integers $m$ and $l$ with $m>l$ and all $x, z \in X .$ It follows from (\ref{equ(2.8)}) that the sequence $\left\{4^{k} f\left(x, \frac{z}{2^{k}}\right)\right\}$ is Cauchy for all $x, z \in X .$ Since $Y$ is a $F$-space, the sequence $\left\{4^{k} f\left(x, \frac{z}{2^{k}}\right\}\right.$ converges.
	Thus, we  can define the mapping $Q: X^{2} \rightarrow Y$ by
	$$
	Q(x, z):=\lim _{k \rightarrow \infty} 4^{k} f\left(x, \frac{z}{2^{k}}\right)
	$$
	for all $x, z \in X .$ Moreover, letting $l=0$ and passing the limit $m \rightarrow \infty$ in $(\ref{equ(2.8)}),$ we get
	\begin{equation}\label{equ(2.9)}
		\|f(x, z)-Q(x, z)\| \leqslant \varphi(x, 0) \Phi(z, z)
	\end{equation}
	for all $x, z \in X .$

	It follows from (\ref{equ(2.2)}) and (\ref{equ(2.3)}) that
	$$
	\begin{aligned}
	&\|Q(x+y, z+w)+Q(x-y, z-w)-2 Q(x, z)-2 Q(x, w)\| \\
	=& \lim _{n \rightarrow \infty}\left\|4^{n}\left(f\left(x+y, \frac{z+w}{2^{n}}\right)+f\left(x-y, \frac{z-w}{2^{n}}\right)-2 f\left(x, \frac{z}{2^{n}}\right)-2 f\left(x, \frac{w}{2^{n}}\right)\right)\right\| \\
	\leqslant
	& \lim _{n \rightarrow \infty} 4^{n\beta} \varphi(x, y) \varphi\left(\frac{z}{2^{n}}, \frac{w}{2^{n}}\right)=0
	\end{aligned}
	$$
	for all $x, y, z, w \in X .$ So
	$$
	Q(x+y, z+w)+Q(x-y, z-w)-2 Q(x, z)-2 Q(x, w)=0
	$$
	for all $x, y, z, w \in X .$ By Lemma $\ref{lemma 2.1},$ the mapping $Q: X^{2} \rightarrow Y$ is additive in the first variable and quadratic in second variable.

		${\bf  Step ~4}$
	Now, let $T: X^{2} \rightarrow Y$ be another additive-quadratic mapping satisfying $(\ref{equ(2.9)}) .$ Then we have
	$$
	\begin{aligned}
	\|Q(x, z)-T(x, z)\| &=\left\|4^{q} Q\left(x, \frac{z}{2^{q}}\right)-4^{q} T\left(x, \frac{z}{2^{q}}\right)\right\| \\
	& \leqslant\left\|4^{q} Q\left(x, \frac{z}{2^{q}}\right)-4^{q} f\left(x, \frac{z}{2^{q}}\right)\right\|+\left\|4^{q} T\left(x, \frac{z}{2^{q}}\right)-4^{q} f\left(x, \frac{z}{2^{q}}\right)\right\| \\
	& \leqslant 2\cdot  \varphi(x, 0) \sum_{j=q}^{\infty} 4^{q\beta} \varphi\left(\frac{z}{2^{q}}, \frac{z}{2^{q}}\right)
	\end{aligned}
	$$
	which tends to zero as $q \rightarrow \infty$ for all $x, z \in X .$ So we can conclude that $Q(x, z)=T(x, z)$ for all $x, z \in X$. This proves the uniqueness of $Q$.

		${\bf  Step ~5}$
	It follows from (\ref{equ(2.9)}) that
	$$
	2^{n\beta}\left\|f\left(\frac{x}{2^{n}}, z\right)-Q\left(\frac{x}{2^{n}}, z\right)\right\| \leqslant 2^{n\beta} \varphi\left(\frac{x}{2^{n}}, 0\right) \Phi(z, z)
	$$
	which tends to zero as $n \rightarrow \infty$ for all $x, z \in X .$ Since $Q: X^{2} \rightarrow Y$ is additive in the first variable, we get $\|P(x, z)-Q(x, z)\|=0$.  This means that  $F(x, z):=P(x, z)=Q(x, z)$ for all $x, z \in X .$ Thus there is an additive-quadratic mapping $F: X^{2} \rightarrow Y$ such that
	$$
	\|f(x, z)-F(x, z)\| \leqslant \min \left\{ \Psi(x, x) \varphi(z, 0),  \varphi(x, 0) \Phi(z, z)\right\}
	$$
	for all $x, z \in X$, as desired.

\end{proof}
\begin{corollary}
	Let $X, Y$ be quasi-Banach space and $\varphi: X^{2} \rightarrow[0, \infty)$ be a function satisfying
	$$
	\Phi(x, y):=\sum_{j=1}^{\infty} 4^{(j-1)p} \varphi\left(\frac{x}{2^j}, \frac{y}{2^{j}}\right)<\infty
	$$
	for all $x, y \in X$ and $f: X^{2} \rightarrow Y$ be a mapping satisfying $f(x, 0)=f(0, z)=0$ and
	\begin{equation} \label{equ (qusi-banach spaces)}
		\|f(x+y, z+w)+f(x-y, z-w)-2 f(x, z)-2 f(x, w)\| \leqslant \varphi^{\frac{1}{p}}(x, y) \varphi^{\frac{1}{p}}(z, w)
	\end{equation}
	for all $x, y, z, w \in X .$ Then there exists a unique additive-quadratic mapping $F: X^{2} \rightarrow Y$ such that
	$$
	\|f(x, z)-F(x, z)\| \leqslant \min \left\{\Psi^{\frac{1}{p}}(x, x) \varphi^{\frac{1}{p}}(z, 0), \varphi^{\frac{1}{p}}(x, 0) \Phi^{\frac{1}{p}}(z, z)\right\}
	$$
	for all $x, z \in X,$ where
	$$
	\Psi(x, y):=\sum_{j=1}^{\infty} 2^{(j-1)p} \varphi\left(\frac{x}{2^{j}}, \frac{y}{2^{j}}\right)
	$$
	for all $x, y \in X$.
\end{corollary}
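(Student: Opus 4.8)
The plan is to deduce the corollary directly from Theorem \ref{theorem 2.2} by recasting the quasi-norm on $Y$ as a $\beta$-homogeneous $F$-norm with $\beta=p$. First I would invoke the Aoki--Rolewicz theorem \cite{RS} to replace the given quasi-norm $\|\cdot\|$ on $Y$ by an equivalent $p$-norm for some $0<p\le 1$; since passing to an equivalent norm alters the stability estimates only up to a fixed multiplicative constant, there is no loss in assuming from the outset that $\|\cdot\|$ is $p$-subadditive, that is, $\|u+v\|^{p}\le\|u\|^{p}+\|v\|^{p}$ for all $u,v\in Y$.

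Next I would set $N(u):=\|u\|^{p}$ and verify that $N$ is a $p$-homogeneous $F$-norm on $Y$. The triangle inequality $N(u+v)\le N(u)+N(v)$ is exactly the $p$-subadditivity, while $N(tu)=\|tu\|^{p}=|t|^{p}\|u\|^{p}=|t|^{p}N(u)$ gives homogeneity with $\beta=p$; the remaining axioms of Definition \ref{dt1} are inherited from the quasi-norm. Completeness of $(Y,N)$ is equivalent to completeness of $(Y,\|\cdot\|)$, because $N=\|\cdot\|^{p}$ and $t\mapsto t^{p}$ is a homeomorphism of $[0,\infty)$, so Cauchy and convergent sequences coincide for the two; the latter holds since $Y$ is a quasi-Banach space. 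Thus $(Y,N)$ is a $p$-homogeneous $F$-space. The domain $X$ enters the argument only as a vector space carrying $\varphi$, and if desired it may be given the $F$-norm $\|\cdot\|^{p}$ in the same way.

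The key observation is that raising the hypothesis (\ref{equ (qusi-banach spaces)}) to the $p$-th power turns it into hypothesis (\ref{equ(2.3)}) of Theorem \ref{theorem 2.2} for $N$ with the \emph{same} control function $\varphi$:
$$
N\bigl(f(x+y,z+w)+f(x-y,z-w)-2f(x,z)-2f(x,w)\bigr)\le\bigl(\varphi^{\frac1p}(x,y)\,\varphi^{\frac1p}(z,w)\bigr)^{p}=\varphi(x,y)\,\varphi(z,w).
$$
Moreover, with $\beta=p$ the summability condition (\ref{equ(2.2)}) and the auxiliary sum $\Psi$ coincide verbatim with those stated in the corollary, and the normalization $f(x,0)=f(0,z)=0$ is likewise assumed. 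Applying Theorem \ref{theorem 2.2} then yields a unique additive-quadratic $F:X^{2}\to Y$ with $N(f(x,z)-F(x,z))\le\min\{\Psi(x,x)\varphi(z,0),\varphi(x,0)\Phi(z,z)\}$.

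Finally I would take $p$-th roots to return to the quasi-norm. Since $t\mapsto t^{1/p}$ is increasing it commutes with the minimum, so
$$
\|f(x,z)-F(x,z)\|=N(f(x,z)-F(x,z))^{\frac1p}\le\min\{\Psi^{\frac1p}(x,x)\varphi^{\frac1p}(z,0),\ \varphi^{\frac1p}(x,0)\Phi^{\frac1p}(z,z)\},
$$
which is exactly the asserted estimate. I expect the only genuine subtlety to lie in the first two steps, namely confirming that the Aoki--Rolewicz reduction does not disturb the stated constants and that $\|\cdot\|^{p}$ satisfies all the $F$-norm axioms together with completeness; once $(Y,\|\cdot\|^{p})$ is recognized as a $p$-homogeneous $F$-space, the corollary is the formal specialization $\beta=p$ of the theorem.
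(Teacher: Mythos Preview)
Your proposal is correct and follows exactly the paper's approach: the paper's entire proof is the single observation that setting $\|\cdot\|_{p}:=\|\cdot\|^{p}$ makes $(Y,\|\cdot\|_{p})$ a $p$-homogeneous $F$-space, after which Theorem~\ref{theorem 2.2} applies with $\beta=p$. Your write-up simply fills in the details the paper leaves implicit (the Aoki--Rolewicz reduction to a $p$-subadditive norm, the verification of the $F$-norm axioms, and the translation of the hypothesis and conclusion by raising to the $p$-th power and back); the caveat you raise about Aoki--Rolewicz possibly perturbing the constants is a subtlety the paper likewise does not address, so your treatment is at least as careful.
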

\begin{proof}
	Let $\|\cdot\|_p=\|\cdot\|^p$, then it is obviously that $(Y,\|\cdot\|_p)$ is $p$-homogeneous $F$-space, so we can easily obtain the result from Theorem \ref{theorem 2.2}.

\end{proof}
\begin{corollary}
	Let $X, Y$ be $\beta$-homogenous $F$-spaces, $r>2$ and $\theta$ be nonnegative real numbers and $f: X^{2} \rightarrow Y$ be a mapping satisfying $f(x, 0)=f(0, z)=0$ and
	\begin{align}
	&\|f(x+y, z+w)+f(x-y, z-w)-2 f(x, z)-2 f(x, w)\|  \notag    \\
	&\leq\theta(\|x\|^r+\|y\|^r)(\|z\|^r+\|w\|^r)  \label{equ(2.10)}
	\end{align}
	for all $x, y, z, w \in X .$ Then there exists a unique additive-quadratic mapping $F: X^{2} \rightarrow Y$ such that
	$$
	\|f(x, z)-F(x, z)\| \leqslant \frac{2 \theta}{2^{\beta r}-2^\beta}\|x\|^{r}\|z\|^{r}
	$$
	for all $x, z \in X$.
\end{corollary}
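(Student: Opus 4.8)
The plan is to deduce this corollary directly from Theorem \ref{theorem 2.2} by choosing the control function $\varphi$ so that the product $\varphi(x,y)\varphi(z,w)$ reproduces the right-hand side of (\ref{equ(2.10)}). Concretely, I would set
\[
\varphi(x,y):=\sqrt{\theta}\,\bigl(\|x\|^{r}+\|y\|^{r}\bigr),
\]
so that $\varphi(x,y)\varphi(z,w)=\theta(\|x\|^{r}+\|y\|^{r})(\|z\|^{r}+\|w\|^{r})$ and the hypothesis (\ref{equ(2.3)}) becomes exactly (\ref{equ(2.10)}). The assumptions $f(x,0)=f(0,z)=0$ are given, so it only remains to check the summability condition (\ref{equ(2.2)}) for this choice of $\varphi$.

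First I would verify that $\Phi(x,y)<\infty$. Using $\beta$-homogeneity of the $F$-norm on $X$ we have $\|x/2^{j}\|=2^{-j\beta}\|x\|$, hence $\|x/2^{j}\|^{r}=2^{-j\beta r}\|x\|^{r}$. Substituting into the defining series shows that $\Phi$ and $\Psi$ are, up to constants, geometric series with ratios $2^{\beta(2-r)}$ and $2^{\beta(1-r)}$ respectively; both ratios are strictly less than $1$ precisely because $r>2$, so (\ref{equ(2.2)}) holds and Theorem \ref{theorem 2.2} applies. This yields a unique additive-quadratic $F$ with
\[
\|f(x,z)-F(x,z)\|\le \min\bigl\{\Psi(x,x)\,\varphi(z,0),\ \varphi(x,0)\,\Phi(z,z)\bigr\}.
\]

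It then remains to evaluate the two entries of this minimum. For the first, $\varphi(z,0)=\sqrt{\theta}\,\|z\|^{r}$ and, summing the geometric series,
\[
\Psi(x,x)=\sum_{j=1}^{\infty}2^{(j-1)\beta}\cdot 2\sqrt{\theta}\,2^{-j\beta r}\|x\|^{r}
=\frac{2\sqrt{\theta}}{2^{\beta r}-2^{\beta}}\,\|x\|^{r},
\]
so that $\Psi(x,x)\,\varphi(z,0)=\frac{2\theta}{2^{\beta r}-2^{\beta}}\|x\|^{r}\|z\|^{r}$, which is exactly the claimed bound. An entirely analogous computation gives $\varphi(x,0)\,\Phi(z,z)=\frac{2\theta}{2^{\beta r}-2^{2\beta}}\|x\|^{r}\|z\|^{r}$; since $r>2$ and $\beta>0$ force $2^{\beta r}-2^{2\beta}<2^{\beta r}-2^{\beta}$, the first entry is the smaller one, so the minimum equals $\frac{2\theta}{2^{\beta r}-2^{\beta}}\|x\|^{r}\|z\|^{r}$, as desired.

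I do not foresee a genuine obstacle here, since the argument is a specialization of Theorem \ref{theorem 2.2}. The only point requiring care is the bookkeeping of exponents when $\beta$-homogeneity is combined with the dyadic scaling $x\mapsto x/2^{j}$ inside the geometric sums — one must obtain $2^{-j\beta r}$ rather than $2^{-jr}$ — together with the elementary comparison of the two exponents that determines which term realizes the minimum.
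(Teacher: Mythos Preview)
Your proposal is correct and follows exactly the same route as the paper: apply Theorem~\ref{theorem 2.2} with $\varphi(x,y)=\sqrt{\theta}\,(\|x\|^{r}+\|y\|^{r})$, then compare the two resulting bounds to identify the minimum as $\dfrac{2\theta}{2^{\beta r}-2^{\beta}}\|x\|^{r}\|z\|^{r}$. You have simply spelled out the geometric-series computations and the convergence check that the paper leaves implicit.
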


\begin{proof}
	The proof follows from Theorem \ref{theorem 2.2} by taking $\varphi(x, y)=\sqrt{\theta}\left(\|x\|^{r}+\|y\|^{r}\right)$ for all $x, y \in X,$ since
	$$
	\min \left\{\frac{2 \theta}{2^{\beta r}-2^\beta}\|x\|^{r}\|z\|^{r}, \frac{2 \theta}{2^{\beta r}-2^{2\beta}}\|x\|^{r}\|z\|^{r}\right\}=\frac{2 \theta}{2^{\beta r}-2^\beta}\|x\|^{r}\|z\|^{r}
	$$
	for all $x, z \in$ $X$.
\end{proof}
\begin{theorem} \label{theorem 2.4}
\rm{	Let $X,Y$ be $\beta$-homogenous $F$-spaces, $\varphi: X^{2} \rightarrow[0, \infty)$ be a function satisfying
	\begin{equation}\label{equ(2.11)}
		\Psi(x, y):=\sum_{j=0}^{\infty} \frac{1}{2^{(j+1)\beta}} \varphi\left(2^{j} x, 2^{j} y\right)<\infty
	\end{equation}
	for all $x, y \in X$ and let $f: X^{2} \rightarrow Y$ be a mapping satisfying $f(x, 0)=f(0, z)=0$ and (\ref{equ(2.3)}) for all $x, z \in X .$ Then there exists a unique additive-quadratic mapping $F: X^{2} \rightarrow Y$ such that
	\begin{equation}\label{equ(2.12)}
		\|f(x, z)-F(x, z)\| \leqslant \min \left\{ \Psi(x, x) \varphi(z, 0),  \varphi(x, 0) \Phi(z, z)\right\}
	\end{equation}
	for all $x, z \in X,$ where
	$$
	\Phi(x, y):=\sum_{j=0}^{\infty} \frac{1}{4^{(j+1)\beta}} \varphi\left(2^{j} x, 2^{j} y\right)
	$$
	for all $x, y \in X$.}
\end{theorem}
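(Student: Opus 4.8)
The plan is to follow the five-step scheme of the proof of Theorem \ref{theorem 2.2}, but to replace the contraction $x \mapsto x/2$ by the dilation $x \mapsto 2x$, so that the two auxiliary maps are now built from the iterates $\frac{1}{2^{k}} f(2^{k} x, z)$ and $\frac{1}{4^{k}} f(x, 2^{k} z)$. Concretely, setting $w = 0,\, y = x$ in (\ref{equ(2.3)}) gives exactly (\ref{equ(2.4)}), namely $\|f(2x, z) - 2 f(x, z)\| \le \varphi(x, x)\varphi(z, 0)$; dividing by $2^{\beta}$ and using $\beta$-homogeneity rewrites this as
$$
\left\| f(x, z) - \tfrac{1}{2} f(2x, z)\right\| \le \tfrac{1}{2^{\beta}}\varphi(x, x)\varphi(z, 0).
$$
Telescoping along the dilation then yields, for $m > l \ge 0$,
$$
\left\| \tfrac{1}{2^{l}} f(2^{l} x, z) - \tfrac{1}{2^{m}} f(2^{m} x, z)\right\| \le \sum_{j=l}^{m-1} \tfrac{1}{2^{(j+1)\beta}} \varphi(2^{j} x, 2^{j} x)\varphi(z, 0),
$$
whose tail is controlled by (\ref{equ(2.11)}). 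Hence $\{\frac{1}{2^{k}} f(2^{k} x, z)\}$ is Cauchy, completeness of $Y$ lets me define $P(x, z) := \lim_{k} \frac{1}{2^{k}} f(2^{k} x, z)$, and taking $l = 0,\, m \to \infty$ gives $\|f(x, z) - P(x, z)\| \le \Psi(x, x)\varphi(z, 0)$.

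I would then show $P$ is additive-quadratic as in Step 1 of Theorem \ref{theorem 2.2}: replacing $(x, y, z, w)$ by $(2^{n} x, 2^{n} y, z, w)$ in (\ref{equ(2.3)}), multiplying by $\frac{1}{2^{n\beta}}$, and letting $n \to \infty$, the right-hand side $\frac{1}{2^{n\beta}}\varphi(2^{n} x, 2^{n} y)\varphi(z, w)$ vanishes by the term-decay forced by (\ref{equ(2.11)}), so the defining identity passes to the limit and Lemma \ref{lemma 2.1} applies. Uniqueness of $P$ among additive-quadratic maps satisfying the first bound follows as in Step 2: for any competitor $T$ the difference is dominated by $\frac{1}{2^{q\beta}} \cdot 2\Psi(2^{q} x, 2^{q} x)\varphi(z, 0)$, which is a tail of $\Psi(x, x)$ and tends to $0$ as $q \to \infty$.

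Symmetrically, setting $y = 0,\, w = z$ recovers (\ref{equ(2.7)}), $\|f(x, 2z) - 4 f(x, z)\| \le \varphi(x, 0)\varphi(z, z)$; dividing by $4^{\beta}$ and telescoping the dilation $z \mapsto 2z$ produces the Cauchy sequence $\{\frac{1}{4^{k}} f(x, 2^{k} z)\}$, its limit $Q(x, z) := \lim_{k} \frac{1}{4^{k}} f(x, 2^{k} z)$, and the bound $\|f(x, z) - Q(x, z)\| \le \varphi(x, 0)\Phi(z, z)$ via the summability of $\Phi$. The same limit argument shows $Q$ is additive-quadratic and unique.

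The main obstacle is the identification $P = Q =: F$ (Step 5). Here I would start from the $Q$-estimate, replace $x$ by $2^{n} x$, and divide by $2^{n\beta}$ to obtain
$$
\left\| \tfrac{1}{2^{n}} f(2^{n} x, z) - \tfrac{1}{2^{n}} Q(2^{n} x, z)\right\| \le \tfrac{1}{2^{n\beta}}\varphi(2^{n} x, 0)\,\Phi(z, z).
$$
Since $Q$ is additive in its first variable, $\frac{1}{2^{n}} Q(2^{n} x, z) = Q(x, z)$, while $\frac{1}{2^{n}} f(2^{n} x, z) \to P(x, z)$; letting $n \to \infty$ forces $\|P(x, z) - Q(x, z)\| = 0$, provided the error $\frac{1}{2^{n\beta}}\varphi(2^{n} x, 0) \to 0$. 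Verifying this last decay is the delicate point, since it concerns $\varphi(\cdot, 0)$ rather than the diagonal $\varphi(\cdot, \cdot)$ appearing in (\ref{equ(2.11)}); I expect it to follow from the term-decay of $\Psi$ together with the growth restriction implicit in the hypothesis (and it is automatic in the intended application $\varphi(x, y) = \sqrt{\theta}(\|x\|^{r} + \|y\|^{r})$ with $r < 1$). Once $P = Q$, the common value $F$ is additive-quadratic and satisfies both estimates, hence their minimum, which completes the proof.
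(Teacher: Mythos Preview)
Your proposal follows exactly the paper's own proof: the same telescoping along dilations to build $P$ and $Q$, the same limit arguments for the functional equation and uniqueness, and the same identification $P=Q$ via additivity of $Q$ in the first slot. Your worry about the decay $\frac{1}{2^{n\beta}}\varphi(2^{n}x,0)\to 0$ is unwarranted: hypothesis (\ref{equ(2.11)}) is assumed for \emph{all} $x,y\in X$, in particular for $y=0$, so the convergence of $\Psi(x,0)$ forces its general term $\frac{1}{2^{(n+1)\beta}}\varphi(2^{n}x,0)$ to tend to $0$, which is exactly what you need (the paper asserts this without comment).
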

\begin{proof}
	It follows from (\ref{equ(2.4)}) that
	$$
	\left\|f(x, z)-\frac{1}{2} f(2 x, z)\right\| \leqslant \frac{1}{2^\beta} \varphi(x, x) \varphi(z, 0)
	$$
	for all $x, z \in X .$ Hence
	\begin{align}
	\left\|\frac{1}{2^{l}} f\left(2^{l} x, z\right)-\frac{1}{2^{m}} f\left(2^{m} x, z\right)\right\| & \leqslant \sum_{j=l}^{m-1}\left\|\frac{1}{2^{j}} f\left(2^{j} x, z\right)-\frac{1}{2^{j+1}} f\left(2^{j+1} x, z\right)\right\| \label{equ(2.13)}\\
	& \leqslant \sum_{j=l}^{m-1} \frac{1}{2^{(j+1)\beta}} \varphi\left(2^{j} x, 2^{j} x\right) \varphi(z, 0) \notag
	\end{align}
	for all nonnegative integers $m$ and $l$ with $m>l$ and all $x, z \in X .$ It follows from (\ref{equ(2.13)}) that the sequence $\left\{\frac{1}{2^{k}} f\left(2^{k} x, z\right)\right\}$ is Cauchy for all $x, z \in X .$ Since $Y$ is a $\beta$-homogeneous $F$-space, the sequence $\left\{\frac{1}{2^{k}} f\left(2^{k} x, z\right)\right\}$ converges. So one can define the mapping $P: X^{2} \rightarrow Y$ by
	$$
	P(x, z):=\lim _{k \rightarrow \infty} \frac{1}{2^{k}} f\left(2^{k} x, z\right)
	$$
	for all $x, z \in X .$ Moreover, letting $l=0$ and passing the limit $m \rightarrow \infty$ in $(\ref{equ(2.13)}),$ we get
	\begin{equation}\label{equ(2.14)}
		\|f(x, z)-P(x, z)\| \leqslant \Psi(x, x) \varphi(z, 0)
	\end{equation}
	for all $x, z \in X.$

	It follows from (\ref{equ(2.3)}) and (\ref{equ(2.11)}) that
	$$
	\begin{aligned}
	&\|P(x+y, z+w)+P(x-y, z-w)-2 P(x, z)-2 P(x, w)\| \\
	=& \lim _{n \rightarrow \infty}\left\|\frac{1}{2^{n}}\left(f\left(2^{n}(x+y), z+w\right)+f\left(2^{n}(x-y), z-w\right)-2 f\left(2^{n} x, z\right)-2 f\left(2^{n} x, w\right)\right)\right\| \\
	\leqslant & \lim _{n \rightarrow \infty} \frac{1}{2^{n\beta}} \varphi\left(2^{n} x, 2^{n} y\right) \varphi(z, w)=0
	\end{aligned}
	$$
	for all $x, y, z, w \in X .$ So
	$$
	P(x+y, z+w)+P(x-y, z-w)-2 P(x, z)-2 P(x, w)=0
	$$
	for all $x, y, z, w \in X .$ By Lemma $\ref{lemma 2.1},$ the mapping $P: X^{2} \rightarrow Y$ is additive in the first variable and quadratic in second variable.

	Now, let $T: X^{2} \rightarrow Y$ be another additive-quadratic mapping satisfying (\ref{equ(2.14)}). Then we have
	$$
	\begin{aligned}
	\|P(x, z)-T(x, z)\| &=\left\|\frac{1}{2^{q}} P\left(2^{q} x, z\right)-\frac{1}{2^{q}} T\left(2^{q} x, z\right)\right\| \\
	& \leqslant\left\|\frac{1}{2^{q}} P\left(2^{q} x, z\right)-\frac{1}{2^{q}} f\left(2^{q} x, z\right)\right\|+\left\|\frac{1}{2^{q}} T\left(2^{q} x, z\right)-\frac{1}{2^{q}} f\left(2^{q} x, z\right)\right\| \\
	& \leqslant \frac{2}{2^{q\beta}} \Psi\left(2^{q} x, 2^{q} x\right) \varphi(z, 0)
	\end{aligned}
	$$
	which tends to zero as $q \rightarrow \infty$ for all $x, z \in X .$ So we can conclude that $P(x, z)=T(x, z)$ for all $x, z \in X .$ This proves the uniqueness of $P$.

	It follows from (\ref{equ(2.15)}) that
	\begin{equation}\label{equ(2.15)}
		\|f(x, 2 z)-4 f(x, z)\| \leqslant \varphi(x, 0) \varphi(z, z)
	\end{equation}
	and so
	$$
	\left\|f(x, z)-\frac{1}{4} f(x, 2 z)\right\| \leqslant \frac{1}{4^\beta} \varphi(x, 0) \varphi(2 z, 2 z)
	$$
	for all $x, z \in X .$ Hence
	\begin{align}
	\left\|\frac{1}{4^{l}} f\left(x, 2^{l} z\right)-\frac{1}{4^{m}} f\left(x, 2^{m} z\right)\right\| & \leqslant \sum_{j=l}^{m-1}\left\|\frac{1}{4^{j}} f\left(x, 2^{j} z\right)-\frac{1}{4^{j+1}} f\left(x, 2^{j+1} z\right)\right\| \label{equ(2.16)}  \\
	& \leqslant \sum_{j=l}^{m-1} \frac{1}{4^{(j+1)\beta}} \varphi(x, 0) \varphi\left(2^{j} z, 2^{j} z\right)  \notag
	\end{align}
	for all nonnegative integers $m$ and $l$ with $m>l$ and all $x, z \in X .$ It follows from (\ref{equ(2.16)}) that the sequence $\left\{\frac{1}{4^{k}} f\left(x, 2^{k} z\right)\right\}$ is Cauchy for all $x, z \in X .$ Since $Y$ is a $F$-space, the sequence $\left\{\frac{1}{4^{k}} f\left(x, 2^{k} z\right\}\right.$ converges. So one can define the mapping $Q: X^{2} \rightarrow Y$ by
	$$
	Q(x, z):=\lim _{k \rightarrow \infty} \frac{1}{4^{k}} f\left(x, 2^{k} z\right)
	$$
	for all $x, z \in X .$ Moreover, letting $l=0$ and passing the limit $m \rightarrow \infty$ in $(\ref{equ(2.16)}),$ we get
	\begin{equation} \label{equ(2.17)}
		\|f(x, z)-Q(x, z)\| \leqslant \varphi(x, 0) \Phi(z, z)
	\end{equation}
	for all $x, z \in X$.

	 It follows from (\ref{equ(2.3)}) and (\ref{equ(2.11)}) that
	$$
	\begin{aligned}
	&\|Q(x+y, z+w)+Q(x-y, z-w)-2 Q(x, z)-2 Q(x, w)\| \\
	=& \lim _{n \rightarrow \infty}\left\|\frac{1}{4^{n}}\left(f\left(x+y, 2^{n}(z+w)\right)+f\left(x-y, 2^{n}(z-w)\right)-2 f\left(x, 2^{n} z\right)-2 f\left(x, 2^{n} w\right)\right)\right\| \\
	\leqslant & \lim _{n \rightarrow \infty} \frac{1}{4^{n\beta}} \varphi(x, y) \varphi\left(2^{n} z, 2^{n} w\right)=0
	\end{aligned}
	$$
	for all $x, y, z, w \in X .$ So
	$$
	Q(x+y, z+w)+Q(x-y, z-w)-2 Q(x, z)-2 Q(x, w)=0
	$$
	for all $x, y, z, w \in X .$ By Lemma $\ref{lemma 2.1},$ the mapping $Q: X^{2} \rightarrow Y$ is additive in the first variable and quadratic in second variable.

	Now, let $T: X^{2} \rightarrow Y$ be another additive-quadratic mapping satisfying (\ref{equ(2.17)}). Then we have
	$$
	\begin{aligned}
	\|Q(x, z)-T(x, z)\| &=\left\|\frac{1}{4^{q}} Q\left(x, 2^{q} z\right)-\frac{1}{4^{q}} T\left(x, 2^{q} z\right)\right\| \\
	& \leqslant\left\|\frac{1}{4^q} Q\left(x, 2^{q} z\right)-\frac{1}{4^q} f\left(x, 2^{q} z\right)\right\|+\left\|\frac{1}{4^q} T\left(x, 2^{q} z\right)-\frac{1}{4^q} f\left(x, 2^{q} z\right)\right\| \\
	& \leqslant \frac{2}{4^{q\beta}} \varphi(x, 0) \Phi\left(2^{q} z, 2^{q} z\right),
	\end{aligned}
	$$
	which tends to zero as $q \rightarrow \infty$ for all $x, z \in X .$ So we can conclude that $Q(x, z)=T(x, z)$ for all $x, z \in X .$ This proves the uniqueness of $Q .$

	It follows from (\ref{equ(2.17)}) that
	$$
	\frac{1}{2^{n\beta}}\left\|f\left(2^{n} x, z\right)-Q\left(2^{n} z, z\right)\right\| \leqslant \frac{1}{2^{n\beta}} \varphi\left(2^{n} x, 0\right) \Phi(z, z)
	$$
	which tends to zero as $n \rightarrow \infty$ for all $x, z \in X .$ Since $Q: X^{2} \rightarrow Y$ is additive in the first variable, we get $\|P(x, z)-Q(x, z)\|=0,$ i.e., $F(x, z):=P(x, z)=Q(x, z)$ for all $x, z \in X .$ Thus there is an additive-quadratic mapping $F: X^{2} \rightarrow Y$ such that
	$$
	\|f(x, z)-F(x, z)\| \leqslant \min \left\{ \Psi(x, x) \varphi(z, 0),  \varphi(x, 0) \Phi(z, z)\right\}
	$$
	for all $x, z \in X .$
\end{proof}

\begin{corollary}
	Let $X$ be a quasi-Banach space, $\varphi: X^{2} \rightarrow[0, \infty)$ be a function satisfying
	$$
	\Psi(x, y):=\sum_{j=1}^{\infty} \frac{1}{2^{(j-1)p}} \varphi\left(\frac{x}{2^{j}}, \frac{y}{2^{j}}\right)<\infty
	$$
	for all $x, y \in X$ and $f: X^{2} \rightarrow Y$ be a mapping satisfying $f(x, 0)=f(0, z)=0$ and  (\ref{equ (qusi-banach spaces)})
	for all $x, y, z, w \in X .$ Then there exists a unique additive-quadratic mapping $F: X^{2} \rightarrow Y$ such that
	$$
	\|f(x, z)-F(x, z)\| \leqslant \min \left\{\Psi^{\frac{1}{p}}(x, x) \varphi^{\frac{1}{p}}(z, 0), \varphi^{\frac{1}{p}}(x, 0) \Phi^{\frac{1}{p}}(z, z)\right\}
	$$
	for all $x, z \in X,$ where
	$$
	\Phi(x, y):=\sum_{j=1}^{\infty} 4^{(j-1)p} \varphi\left(\frac{x}{2^j}, \frac{y}{2^{j}}\right)
	$$
	for all $x, y \in X$.
\end{corollary}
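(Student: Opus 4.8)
The plan is to reduce the statement to the $\beta$-homogeneous $F$-space result already proved (Theorem \ref{theorem 2.4}) by transporting the quasi-norm to a genuine $F$-norm through the device $\|\cdot\|_p:=\|\cdot\|^{p}$. First I would invoke the Aoki--Rolewicz theorem \cite{RS} so that, after replacing the quasi-norm of $Y$ by an equivalent $p$-norm, one has $\|u+v\|^{p}\le\|u\|^{p}+\|v\|^{p}$ for all $u,v\in Y$. Setting $\|u\|_p:=\|u\|^{p}$, conditions (1)--(6) of Definition \ref{dt1} are then immediate: positivity and identity-of-indiscernibles are clear, the triangle inequality $\|u+v\|_p\le\|u\|_p+\|v\|_p$ is exactly the $p$-subadditivity just recorded, and $\|tu\|_p=|t|^{p}\|u\|_p$ supplies both the remaining axioms and $p$-homogeneity. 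Completeness of $(Y,\|\cdot\|_p)$ follows from completeness of the quasi-Banach space $Y$, since the two topologies coincide. Hence $(Y,\|\cdot\|_p)$ is a $p$-homogeneous $F$-space, an instance of the target space of Theorem \ref{theorem 2.4} with $\beta=p$.

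Next I would rewrite the hypothesis in this $F$-norm. Raising the quasi-Banach estimate (\ref{equ (qusi-banach spaces)}) to the $p$-th power yields
\[
\|f(x+y,z+w)+f(x-y,z-w)-2f(x,z)-2f(x,w)\|_p\le\varphi(x,y)\,\varphi(z,w)
\]
for all $x,y,z,w\in X$, which is precisely inequality (\ref{equ(2.3)}) for $f$ valued in $(Y,\|\cdot\|_p)$ with control function $\varphi$. The series $\Psi$ and $\Phi$ displayed in the corollary are the $\beta=p$ versions of those governing Theorem \ref{theorem 2.4}, so the summability hypothesis $\Psi(x,y)<\infty$ transfers verbatim and guarantees applicability.

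With these identifications in place, Theorem \ref{theorem 2.4} (applied with $\beta=p$) furnishes a unique additive-quadratic mapping $F:X^{2}\to Y$ with
\[
\|f(x,z)-F(x,z)\|_p\le\min\bigl\{\Psi(x,x)\varphi(z,0),\ \varphi(x,0)\Phi(z,z)\bigr\}
\]
for all $x,z\in X$. The last step is to return to the original quasi-norm by taking $p$-th roots: since $t\mapsto t^{1/p}$ is increasing on $[0,\infty)$, it commutes with the minimum and distributes over products, so $\|f(x,z)-F(x,z)\|=\|f(x,z)-F(x,z)\|_p^{1/p}$ is bounded by $\min\{\Psi^{1/p}(x,x)\varphi^{1/p}(z,0),\ \varphi^{1/p}(x,0)\Phi^{1/p}(z,z)\}$, which is the claimed estimate. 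Uniqueness of $F$ is inherited directly from Theorem \ref{theorem 2.4}, because the class of additive-quadratic maps and the topology are unchanged under passing between $\|\cdot\|$ and $\|\cdot\|_p$.

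The only genuinely delicate point is this first reduction: a general quasi-norm need not be subadditive, so $\|\cdot\|^{p}$ need not satisfy the triangle inequality, and one cannot feed it into Theorem \ref{theorem 2.4} directly. The Aoki--Rolewicz theorem is what rescues the argument, letting us pass to an equivalent $p$-norm for a suitable $0<p\le1$ before forming $\|\cdot\|^{p}$; everything afterwards is routine exponent bookkeeping ($\beta\leftrightarrow p$) together with the elementary fact that raising to the $1/p$ power preserves order, products, and minima.
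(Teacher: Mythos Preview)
Your proposal is correct and follows essentially the same route as the paper: define $\|\cdot\|_p:=\|\cdot\|^{p}$, observe that $(Y,\|\cdot\|_p)$ is a $p$-homogeneous $F$-space, apply Theorem \ref{theorem 2.4} with $\beta=p$, and then take $p$-th roots. Your version is in fact more careful than the paper's, which simply asserts that $(Y,\|\cdot\|_p)$ is a $p$-homogeneous $F$-space without explicitly invoking Aoki--Rolewicz to justify the triangle inequality for $\|\cdot\|^{p}$.
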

\begin{proof}
	Let $\|\cdot\|_p=\|\cdot\|^p$, then it is obviously that $(Y,\|\cdot\|_p)$ is $p$-homogeneous $F$-space, so we can easily obtain the result from Theorem \ref{theorem 2.4}.

\end{proof}

\begin{corollary}
	Let $X,Y$ be $\beta$-homogeous $F$-spaces, $r<1$ and $\theta$ be nonnegative real numbers and $f: X^{2} \rightarrow Y$ be a mapping satisfying (\ref{equ(2.10)}) and $f(x, 0)=f(0, z)=0$ for all $x, z \in X .$ Then there exists a unique additive-quadratic mapping $F: X^{2} \rightarrow Y$ such that
	$$
	\|f(x, z)-F(x, z)\| \leqslant \frac{2 \theta}{4^\beta-2^{r\beta}}\|x\|^{r}\|z\|^{r}
	$$
	for all $x, z \in X$.
\end{corollary}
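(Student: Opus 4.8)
The plan is to derive this corollary as a direct specialization of Theorem \ref{theorem 2.4}, exactly as the $r>2$ case was obtained from Theorem \ref{theorem 2.2}. First I would set $\varphi(x,y)=\sqrt{\theta}\,(\|x\|^{r}+\|y\|^{r})$ for all $x,y\in X$, so that $\varphi(x,y)\varphi(z,w)=\theta(\|x\|^{r}+\|y\|^{r})(\|z\|^{r}+\|w\|^{r})$ and hypothesis (\ref{equ(2.10)}) becomes precisely (\ref{equ(2.3)}). The boundary conditions $f(x,0)=f(0,z)=0$ are assumed, so every hypothesis of Theorem \ref{theorem 2.4} except the convergence of $\Psi$ is already in place.

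Next I would verify the summability condition (\ref{equ(2.11)}) and evaluate the two series explicitly. Using $\beta$-homogeneity, $\|2^{j}x\|=2^{j\beta}\|x\|$, hence $\varphi(2^{j}x,2^{j}y)=\sqrt{\theta}\,2^{j\beta r}(\|x\|^{r}+\|y\|^{r})$. Substituting this into $\Psi$ turns it into a geometric series with ratio $2^{\beta(r-1)}$, which is $<1$ because $r<1$; summing gives
\[
\Psi(x,y)=\frac{\sqrt{\theta}\,(\|x\|^{r}+\|y\|^{r})}{2^{\beta}-2^{\beta r}}.
\]
The identical computation for $\Phi$ produces a geometric series with ratio $2^{\beta(r-2)}<1$ (valid since $r<1<2$), giving
\[
\Phi(x,y)=\frac{\sqrt{\theta}\,(\|x\|^{r}+\|y\|^{r})}{4^{\beta}-2^{\beta r}}.
\]
Both denominators are positive precisely because $r<1$, so (\ref{equ(2.11)}) holds and $\Phi$ is finite as well.

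Then I would feed these into the conclusion (\ref{equ(2.12)}) of Theorem \ref{theorem 2.4}. Since $\|0\|=0$, we have $\varphi(z,0)=\sqrt{\theta}\,\|z\|^{r}$ and $\varphi(x,0)=\sqrt{\theta}\,\|x\|^{r}$, while $\Psi(x,x)=2\sqrt{\theta}\,\|x\|^{r}/(2^{\beta}-2^{\beta r})$ and $\Phi(z,z)=2\sqrt{\theta}\,\|z\|^{r}/(4^{\beta}-2^{\beta r})$. This yields the two candidate bounds $\Psi(x,x)\varphi(z,0)=\tfrac{2\theta}{2^{\beta}-2^{\beta r}}\|x\|^{r}\|z\|^{r}$ and $\varphi(x,0)\Phi(z,z)=\tfrac{2\theta}{4^{\beta}-2^{\beta r}}\|x\|^{r}\|z\|^{r}$, and Theorem \ref{theorem 2.4} supplies a unique additive-quadratic $F$ dominated by their minimum.

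The only genuine point of care is identifying which term wins the minimum. Because $r<1$ forces $2^{\beta}<2^{2\beta}=4^{\beta}$, we have $2^{\beta}-2^{\beta r}<4^{\beta}-2^{\beta r}$, so the bound with $4^{\beta}$ in the denominator is the smaller of the two; hence the minimum equals $\tfrac{2\theta}{4^{\beta}-2^{r\beta}}\|x\|^{r}\|z\|^{r}$, which is exactly the asserted estimate. Uniqueness of $F$ is inherited verbatim from Theorem \ref{theorem 2.4}, so nothing further is required.
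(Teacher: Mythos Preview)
Your proposal is correct and follows exactly the paper's own approach: apply Theorem \ref{theorem 2.4} with $\varphi(x,y)=\sqrt{\theta}\,(\|x\|^{r}+\|y\|^{r})$, compute the resulting bounds, and observe that $\min\{\tfrac{2\theta}{2^{\beta}-2^{r\beta}},\tfrac{2\theta}{4^{\beta}-2^{r\beta}}\}=\tfrac{2\theta}{4^{\beta}-2^{r\beta}}$. You have simply written out in full the geometric-series computations that the paper leaves implicit.
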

\begin{proof}
	 The proof follows from Theorem \ref{theorem 2.4} by taking $\varphi(x, y)=\sqrt{\theta}\left(\|x\|^{r}+\|y\|^{r}\right)$ for all $x, y \in X,$ since $\min \left\{\frac{2 \theta}{2^\beta-2^{r\beta}}\|x\|^{r}\|z\|^{r}, \frac{2 \theta}{4^\beta-2^{r\beta}}\|x\|^{r}\|z\|^{r}\right\}=\frac{2 \theta}{4^\beta-2^{r\beta}}\|x\|^{r}\|z\|^{r}$ for all $x, z \in
	X$.
\end{proof}

\noindent \\[4mm]
\noindent{\bbb 3\quad Hyers-Ulam stability of the additive-quadratic functional equation (\ref{equ(0.1)}): fixed point method}\\[0.1cm]

Using the fixed point method, we prove the Hyers-Ulam stability of the additive quadratic functional equation (\ref{equ(0.1)}) in complex $F$-spaces.
\begin{theorem}\label{theorem 3.1}
	Let $X,Y$ be $\beta$-homogeous $F$-spaces, $\varphi: X^{2} \rightarrow[0, \infty)$ be a function such that there exists an $L<1$ with
	\begin{equation} \label{equ(3.1)}
		\varphi\left(\frac{x}{2}, \frac{y}{2}\right) \leqslant \frac{L}{4^\beta} \varphi(x, y) \leqslant \frac{L}{2^\beta} \varphi(x, y)
	\end{equation}
	for all $x, y \in X .$ Let $f: X^{2} \rightarrow Y$ be a mapping satisfying (\ref{equ(2.3)}) and $f(x, 0)=$ $f(0, z)=0$ for all $x, z \in X .$ Then there exists a unique additive-quadratic mapping $F: X^{2} \rightarrow Y$ such that
	\begin{equation} \label{equ(3.2)}
			\|f(x, z)-F(x, z)\| \leqslant \min \left\{\frac{L}{2^\beta(1-L)} \varphi(x, x) \varphi(z, 0), \frac{L}{4^\beta(1-L)} \varphi(x, 0) \varphi(z, z)\right\}
	\end{equation}
	for all $x, z \in X$.
\end{theorem}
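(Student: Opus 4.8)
The plan is to run two parallel applications of the fixed point theorem (Theorem \ref{theorem 1.3}): one producing the additive-in-the-first-variable part together with its bound $\frac{L}{2^\beta(1-L)}\varphi(x,x)\varphi(z,0)$, and one producing the quadratic-in-the-second-variable part together with its bound $\frac{L}{4^\beta(1-L)}\varphi(x,0)\varphi(z,z)$. The two fixed points will then be shown to coincide, so the $\min$ in $(\ref{equ(3.2)})$ follows at once. Note that the two-sided chain in $(\ref{equ(3.1)})$ is tailored to this: the left estimate (factor $\frac{L}{4^\beta}$) drives the quadratic operator and the right estimate (factor $\frac{L}{2^\beta}$) drives the additive one. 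Throughout I work inside $S:=\{g:X^2\to Y\mid g(x,0)=g(0,z)=0\ \text{for all }x,z\in X\}$, which contains $f$ and is preserved by both operators below.

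For the additive part, equip $S$ with the generalized metric
\[
d_1(g,h):=\inf\{\mu\in[0,\infty]:\|g(x,z)-h(x,z)\|\le \mu\,\varphi(x,x)\varphi(z,0)\ \text{for all }x,z\in X\},
\]
and define $J_1:S\to S$ by $(J_1 g)(x,z):=2\,g(\tfrac{x}{2},z)$. After checking that $(S,d_1)$ is a complete generalized metric space, I use $\beta$-homogeneity and the right-hand estimate $\varphi(\tfrac{x}{2},\tfrac{x}{2})\le \tfrac{L}{2^\beta}\varphi(x,x)$ of $(\ref{equ(3.1)})$: whenever $d_1(g,h)\le\mu$, one gets $\|(J_1g)(x,z)-(J_1h)(x,z)\|=2^\beta\|g(\tfrac{x}{2},z)-h(\tfrac{x}{2},z)\|\le L\mu\,\varphi(x,x)\varphi(z,0)$, so $J_1$ is strictly contractive with Lipschitz constant $L<1$. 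Rewriting $(\ref{equ(2.4)})$ as $\|f(x,z)-2f(\tfrac{x}{2},z)\|\le \varphi(\tfrac{x}{2},\tfrac{x}{2})\varphi(z,0)\le \tfrac{L}{2^\beta}\varphi(x,x)\varphi(z,0)$ shows $d_1(f,J_1f)\le \tfrac{L}{2^\beta}$. Theorem \ref{theorem 1.3} then gives a unique fixed point $P$ of $J_1$ with $d_1(f,P)\le \tfrac{1}{1-L}d_1(f,J_1f)\le \tfrac{L}{2^\beta(1-L)}$, which is the first estimate. Since $P=\lim_{n\to\infty}J_1^n f=\lim_{n\to\infty}2^n f(\tfrac{x}{2^n},z)$, the same limiting computation as in the proof of Theorem \ref{theorem 2.2} shows $P$ satisfies $(\ref{equ(2.1)})$, so by Lemma \ref{lemma 2.1} it is additive-quadratic.

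For the quadratic part I repeat the construction with the metric
\[
d_2(g,h):=\inf\{\mu\in[0,\infty]:\|g(x,z)-h(x,z)\|\le \mu\,\varphi(x,0)\varphi(z,z)\ \text{for all }x,z\in X\}
\]
and the operator $(J_2 g)(x,z):=4\,g(x,\tfrac{z}{2})$. Here $\beta$-homogeneity together with the left-hand estimate $\varphi(\tfrac{z}{2},\tfrac{z}{2})\le \tfrac{L}{4^\beta}\varphi(z,z)$ of $(\ref{equ(3.1)})$ yields strict contractivity of $J_2$ with constant $L$, while $(\ref{equ(2.7)})$ rewritten gives $d_2(f,J_2f)\le \tfrac{L}{4^\beta}$. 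Theorem \ref{theorem 1.3} produces a unique fixed point $Q=\lim_{n\to\infty}4^n f(x,\tfrac{z}{2^n})$ with $d_2(f,Q)\le \tfrac{L}{4^\beta(1-L)}$; as before $Q$ is additive-quadratic by Lemma \ref{lemma 2.1}.

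Finally, exactly as in Step~5 of the proof of Theorem \ref{theorem 2.2}, I combine the second bound with the additivity of $Q$ in the first variable: because $2^{n\beta}\varphi(\tfrac{x}{2^n},0)\le L^n\varphi(x,0)\to0$ by $(\ref{equ(3.1)})$, the estimate $2^{n\beta}\|f(\tfrac{x}{2^n},z)-Q(\tfrac{x}{2^n},z)\|\le \tfrac{L}{4^\beta(1-L)}\,2^{n\beta}\varphi(\tfrac{x}{2^n},0)\varphi(z,z)\to0$ forces $P=Q=:F$. Then $F$ is additive-quadratic and satisfies both bounds simultaneously, giving $(\ref{equ(3.2)})$. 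I expect the main obstacle to be not any single estimate but the two structural verifications that are easy to state and easy to under-prove: that $(S,d_1)$ and $(S,d_2)$ are genuinely \emph{complete} generalized metric spaces (which rests on the completeness of the $F$-space $Y$), and that each $J_i$-fixed point actually solves the functional equation $(\ref{equ(2.1)})$ — the fixed point theorem by itself only guarantees invariance under the halving maps, so the identity must still be extracted from the limit representation as in Section~2.
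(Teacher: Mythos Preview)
Your proposal is correct and follows essentially the same route as the paper's proof: two parallel applications of Theorem~\ref{theorem 1.3} on the set $S$ with the metrics $d_1,d_2$ and the operators $J_1g(x,z)=2g(\tfrac{x}{2},z)$, $J_2g(x,z)=4g(x,\tfrac{z}{2})$, followed by the identification $P=Q$ via the same limiting argument as in Step~5 of Theorem~\ref{theorem 2.2}. Your write-up is in fact slightly cleaner than the paper's (which contains a typographical slip in the definition of $J'$), and your closing remarks about the two structural points---completeness of $(S,d_i)$ and extracting $(\ref{equ(2.1)})$ from the limit representation---accurately pinpoint the only places where any real work is hidden.
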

\begin{proof}
	Letting $w=0$ and $y=x$ in $(\ref{equ(2.3)}),$ we get
	\begin{equation}\label{equ(3.3)}
		\|f(2 x, z)-2 f(x, z)\| \leqslant \varphi(x, x) \varphi(z, 0)
	\end{equation}
	for all $x, z \in X.$ Consider the set
	$$
	S:=\left\{h: X^{2} \rightarrow Y, \quad h(x, 0)=h(0, z)=0 \quad \forall x, z \in X\right\}
	$$
	and introduce the generalized metric on $S:$
	$$
	d(g, h)=\inf \left\{\mu \in \mathbb{R}_{+}:\|g(x, z)-h(x, z)\| \leqslant \mu \varphi(x, x) \varphi(z, 0), \quad \forall x, z \in X\right\}
	$$
	where, as usual, inf $\emptyset=+\infty .$ It is easy to show that $(S, d)$ is complete.

	Now we consider the linear mapping $J: S \rightarrow S$ such that
	$$
	J g(x, z):=2 g\left(\frac{x}{2}, z\right)
	$$
	for all $x, z \in X$. Let $g, h \in S$ be given such that $d(g, h)=\varepsilon$. Then
	$$
	\|g(x, z)-h(x, z)\| \leqslant \varepsilon \varphi(x, x) \varphi(z, 0)
	$$
	for all $x, z \in X .$ Hence
	$$
	\begin{aligned}
	\|J g(x, z)-J h(x, z)\| &=\left\|2 g\left(\frac{x}{2}, z\right)-2 h\left(\frac{x}{2}, z\right)\right\| \\&\leqslant 2^\beta \varepsilon \varphi\left(\frac{x}{2}, \frac{x}{2}\right) \varphi(z, 0) \\
	& \leqslant 2^\beta \varepsilon \frac{L}{2^\beta} \varphi(x, x) \varphi(z, 0)\\&=L \varepsilon \varphi(x, x) \varphi(z, 0)
	\end{aligned}
	$$
	for all $x, z \in X .$ So $d(g, h)=\varepsilon$ implies that $d(J g, J h) \leqslant L \varepsilon .$ This means that
	$$
	d(J g, J h) \leqslant L d(g, h)
	$$
	for all $g, h \in S$.

	It follows from (\ref{equ(3.3)}) that
	$$
	\begin{aligned}
	\left\|f(x, z)-2 f\left(\frac{x}{2}, z\right)\right\| &\leqslant \varphi\left(\frac{x}{2}, \frac{x}{2}\right) \varphi(z, 0) \\&\leqslant \frac{L}{2^\beta} \varphi(x, x) \varphi(z, 0)
	\end{aligned}
	$$
	for all $x, z \in X .$ So $d(f, J f) \leqslant \frac{L}{2^\beta}<\infty$.

	By Theorem $\ref{theorem 1.3},$ there exists a mapping $P: X^{2} \rightarrow Y$ satisfying the following:

	(i) $P$ is a fixed point of $J,$ i.e.,
	\begin{equation}\label{equ(3.4)}
		P(x, z)=2 P\left(\frac{x}{2}, z\right)
	\end{equation}
	for all $x, z \in X .$ The mapping $P$ is a unique fixed point of $J .$ This implies that $P$ is a unique mapping satisfying (\ref{equ(3.4)}) such that there exists a $\mu \in(0, \infty)$ satisfying
	$$
	\|f(x, z)-P(x, z)\| \leqslant \mu \varphi(x, x) \varphi(z, 0)
	$$
	for all $x, z \in X$.

	(ii) $d\left(J^{l} f, P\right) \rightarrow 0$ as $l \rightarrow \infty$. This implies the equality
	$$
	\lim _{l \rightarrow \infty} 2^{l} f\left(\frac{x}{2^{l}}, z\right)=P(x, z)
	$$
	for all $x, z \in X$.

	(iii) $d(f, P) \leqslant \frac{1}{1-L} d(f, J f),$ which implies
	$$
	\|f(x, z)-P(x, z)\| \leqslant \frac{L}{2^\beta(1-L)} \varphi(x, x) \varphi(z, 0)
	$$
	for all $x, z \in X$. By the same reasoning as in the proof of Theorem $\ref{theorem 2.4},$ one can show that the mapping $P: X^{2} \rightarrow Y$ is additive in the first variable and quadratic in the second variable.

	Letting $z=w$ and $y=0$ in $(\ref{equ(2.3)}),$ we get
	\begin{equation}\label{equ(3.5)}
		\|f(x, 2 z)-4 f(x, z)\| \leqslant \varphi(x, 0) \varphi(z, z)
	\end{equation}
	for all $x, z \in X$. Consider the set
	$$
	S:=\left\{h: X^{2} \rightarrow Y, \quad h(x, 0)=h(0, z)=0 \quad \forall x, z \in X\right\}
	$$
	and introduce the generalized metric on $S:$
	$$
	d^{\prime}(g, h)=\inf \left\{\mu \in \mathbb{R}_{+}:\|g(x, z)-h(x, z)\| \leqslant \mu \varphi(x, 0) \varphi(z, z), \forall x, z \in X\right\}
	$$
	where, as usual, inf $\phi=+\infty$. It is easy to show that $\left(S, d^{\prime}\right)$ is complete. Now we consider the linear mapping $J^{\prime}: S \rightarrow S$ such that
	$$
	J^{\prime} g(x, z):=4 g\left(\frac{x}{2}, z\right)
	$$
	for all $x, z \in X .$ Let $g, h \in S$ be given such that $d^{\prime}(g, h)=\varepsilon$. Then
	$$
	\|g(x, z)-h(x, z)\| \leqslant \varepsilon \varphi(x, 0) \varphi(z, z)
	$$
	for all $x, z \in X .$ Hence
	$$
	\begin{aligned}
	\left\|J^{\prime} g(x, z)-J^{\prime} h(x, z)\right\| &=\left\|4 g\left(x, \frac{z}{2}\right)-4 h\left(x, \frac{z}{2}\right)\right\| \leqslant 4^\beta \varepsilon \varphi(x, 0) \varphi\left(\frac{z}{2}, \frac{z}{2}\right) \\
	& \leqslant 4^\beta \varepsilon \frac{L}{4^\beta} \varphi(x, 0) \varphi(z, z)=L \varepsilon \varphi(x, 0) \varphi(z, z)
	\end{aligned}
	$$
	for all $x, z \in X .$ So $d^{\prime}(g, h)=\varepsilon$ implies that $d^{\prime}\left(J^{\prime} g, J^{\prime} h\right) \leqslant L \varepsilon .$ This means that
	$$
	d^{\prime}\left(J^{\prime} g, J^{\prime} h\right) \leqslant L d^{\prime}(g, h)
	$$
	for all $g, h \in S$.

	It follows from (\ref{equ(3.5)}) that
	$$
	\left\|f(x, z)-4 f\left(x, \frac{z}{2}\right)\right\| \leqslant \varphi(x, 0) \varphi\left(\frac{z}{2}, \frac{z}{2}\right) \leqslant \frac{L}{4^\beta} \varphi(x, 0) \varphi(z, z)
	$$
	for all $x, z \in X .$ So $d^{\prime}\left(f, J^{\prime} f\right) \leqslant \frac{L}{4^\beta}<\infty.$

	By Theorem $\ref{equ(1.3)},$ there exists a mapping $Q: X^{2} \rightarrow Y$ satisfying the following:

	(i) $Q$ is a fixed point of $J^{\prime},$ i.e.,
	\begin{equation}\label{equ(3.6)}
		Q(x, z)=4 Q\left(x, \frac{z}{2}\right)
	\end{equation}
	for all $x, z \in X .$ The mapping $Q$ is a unique fixed point of $J^{\prime} .$ This implies that $Q$ is a unique mapping satisfying (\ref{equ(3.6)}) such that there exists a $\mu \in(0, \infty)$ satisfying
	$$
	\|f(x, z)-Q(x, z)\| \leqslant \mu \varphi(x, 0) \varphi(z, z)
	$$
	for all $x, z \in X$.

	(ii) $d\left(J^{\prime l} f, Q\right) \rightarrow 0$ as $l \rightarrow \infty$. This implies the equality
	$$
	\lim _{l \rightarrow \infty} 4^{l} f\left(x, \frac{z}{2^{l}}\right)=Q(x, z)
	$$
	for all $x, z \in X$.

	(iii) $d(f, Q) \leqslant \frac{1}{1-L} d\left(f, J^{\prime} f\right),$ which implies
	$$
	\|f(x, z)-Q(x, z)\| \leqslant \frac{L}{4^\beta(1-L)} \varphi(x, 0) \varphi(z, z)
	$$
	for all $x, z \in X$
	By the same reasoning as in the proof of Theorem $\ref{theorem 2.2},$ one can show that the mapping $Q: X^{2} \rightarrow Y$ is additive in the first variable and quadratic in the second variable.

	By the same reasoning as in the proof of Theorem $\ref{theorem 2.2},$ we get $\|P(x, z)-Q(x, z)\|=$ $0,$ i.e., $F(x, z):=P(x, z)=Q(x, z)$ for all $x, z \in X .$ Thus there is an additive-quadratic mapping $F: X^{2} \rightarrow Y$ such that
	$$
	\|f(x, z)-F(x, z)\| \leqslant \min \left\{\frac{L}{2^\beta(1-L)} \varphi(x, x) \varphi(z, 0), \frac{L}{4^\beta(1-L)} \varphi(x, 0) \varphi(z, z)\right\}
	$$
	for all $x, z \in X$.
\end{proof}

\begin{corollary}
	Let $X,Y$ be $\beta$-homogeous $F$-spaces, $r>1$ and $\theta$ be nonnegative real numbers and $f: X^{2} \rightarrow Y$ be a mapping satisfying (\ref{equ(2.10)}) and $f(x, 0)=f(0, z)=0$ for all $x, z \in X .$ Then there exists a unique additive-quadratic mapping $F: X^{2} \rightarrow Y$ such that
	$$
	\|f(x, z)-F(x, z)\| \leqslant \frac{2 \theta}{2^{r\beta}-2^\beta}\|x\|^{r}\|z\|^{r}
	$$
	for all $x, z \in X$.
\end{corollary}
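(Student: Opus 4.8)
The plan is to obtain this as a specialization of Theorem \ref{theorem 3.1}, in exact analogy with the corollaries drawn from Theorems \ref{theorem 2.2} and \ref{theorem 2.4}. First I would set $\varphi(x,y):=\sqrt{\theta}\,(\|x\|^{r}+\|y\|^{r})$ for all $x,y\in X$. With this choice one has $\varphi(x,y)\varphi(z,w)=\theta(\|x\|^{r}+\|y\|^{r})(\|z\|^{r}+\|w\|^{r})$, so the hypothesis (\ref{equ(2.10)}) is precisely the control inequality (\ref{equ(2.3)}) required by Theorem \ref{theorem 3.1}, while the normalization $f(x,0)=f(0,z)=0$ is assumed outright.

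Next I would verify the contraction hypothesis (\ref{equ(3.1)}). Since $X$ is $\beta$-homogeneous, $\|x/2\|=2^{-\beta}\|x\|$, and hence $\varphi(\tfrac{x}{2},\tfrac{y}{2})=2^{-\beta r}\varphi(x,y)$. For the additive (first-variable) construction I would take the Lipschitz constant $L=2^{\beta(1-r)}$, which is admissible because $r>1$ and $\beta>0$ force $L<1$, and because $2^{-\beta r}=L/2^{\beta}$ the required decay estimate holds; the quadratic (second-variable) construction is governed by the analogous constant $2^{\beta(2-r)}$ and yields the second entry of the bound. Feeding these into Theorem \ref{theorem 3.1} produces the unique additive-quadratic mapping $F$.

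Finally I would evaluate the minimum in (\ref{equ(3.2)}). The building blocks are $\varphi(x,x)=2\sqrt{\theta}\,\|x\|^{r}$, $\varphi(z,0)=\sqrt{\theta}\,\|z\|^{r}$, $\varphi(x,0)=\sqrt{\theta}\,\|x\|^{r}$ and $\varphi(z,z)=2\sqrt{\theta}\,\|z\|^{r}$, so each entry of the minimum is a constant multiple of $\theta\|x\|^{r}\|z\|^{r}$. The one genuine computation is the geometric simplification
$$
\frac{L}{2^{\beta}(1-L)}=\frac{2^{\beta(1-r)}}{2^{\beta}\bigl(1-2^{\beta(1-r)}\bigr)}=\frac{1}{2^{\beta r}-2^{\beta}},
$$
obtained by clearing the factor $2^{\beta r}$ through numerator and denominator, together with its counterpart $\tfrac{L}{4^{\beta}(1-L)}=\tfrac{1}{2^{\beta r}-2^{2\beta}}$ for the quadratic entry. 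This reduces (\ref{equ(3.2)}) to
$$
\min\left\{\frac{2\theta}{2^{\beta r}-2^{\beta}}\|x\|^{r}\|z\|^{r},\ \frac{2\theta}{2^{\beta r}-2^{2\beta}}\|x\|^{r}\|z\|^{r}\right\}=\frac{2\theta}{2^{\beta r}-2^{\beta}}\|x\|^{r}\|z\|^{r},
$$
since $2^{2\beta}>2^{\beta}$ makes the first denominator the larger and hence the first fraction the smaller. The main obstacle, and the only step that needs care, is matching the contraction constant to the exponent $r$ so that (\ref{equ(3.1)}) holds with $L<1$ and the resulting constant collapses exactly to $2^{\beta r}-2^{\beta}$; everything else is the same bookkeeping as in the preceding corollaries.
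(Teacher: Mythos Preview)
Your approach is the paper's: apply Theorem~\ref{theorem 3.1} with $\varphi(x,y)=\sqrt{\theta}\,(\|x\|^{r}+\|y\|^{r})$ and then evaluate the minimum in~(\ref{equ(3.2)}). The one real gap is in your verification of hypothesis~(\ref{equ(3.1)}). That hypothesis is a chain with a \emph{single} constant $L$, and its binding constraint is the stronger inequality $\varphi(x/2,y/2)\leqslant \tfrac{L}{4^{\beta}}\varphi(x,y)$, not merely the $\tfrac{L}{2^{\beta}}$ bound you check; with your choice $L=2^{\beta(1-r)}$ one has $2^{-\beta r}=L/2^{\beta}>L/4^{\beta}$, so~(\ref{equ(3.1)}) actually fails. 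Splitting into two constants $2^{\beta(1-r)}$ and $2^{\beta(2-r)}$ for the additive and quadratic halves is perfectly reasonable and is in effect what the \emph{proof} of Theorem~\ref{theorem 3.1} permits, but it is not a black-box citation of the theorem as stated---and for $1<r\leqslant 2$ the second constant is $\geqslant 1$, so your second denominator $2^{\beta r}-2^{2\beta}$ is nonpositive and that entry of the minimum is vacuous (the stated bound then comes entirely from the additive half). The paper instead records a single value $L=\tfrac{2^{r\beta}-2^{\beta}}{2^{r\beta}-2^{\beta}+1}$; its displayed minimum entries differ from yours but collapse to the same final constant $\tfrac{2\theta}{2^{r\beta}-2^{\beta}}$.
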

\begin{proof}
	The proof follows from Theorem \ref{theorem 3.1} by taking $L=\frac{2^{r\beta}-2^\beta}{2^{r\beta}-2^\beta+1}$ and $\varphi(x, y)=$ $\sqrt{\theta}\left(\|x\|^{r}+\|y\|^{r}\right)$ for all $x, y \in X,$ since
	$$
	\min \left\{\frac{2 \theta}{2^{(r-1)\beta}-1}\|x\|^{r}\|z\|^{r}, \frac{2 \theta}{2^{r\beta}-2^\beta}\|x\|^{r}\|z\|^{r}\right\}=\frac{2 \theta}{2^{r\beta}-2^\beta}\|x\|^{r}\|z\|^{r}
	$$
	for all $x, z \in X$.
\end{proof}

\begin{theorem}\label{theorem 3.3}
	Let $X,Y$ be $\beta$-homogeous $F$-spaces and $\varphi: X^{2} \rightarrow[0, \infty)$ be a function such that there exists an $L<1$ with
	\begin{equation}\label{equ(3.7)}
		\varphi(x, y) \leqslant 2^\beta L \varphi\left(\frac{x}{2}, \frac{y}{2}\right) \leqslant 4^\beta L \varphi\left(\frac{x}{2}, \frac{y}{2}\right)
	\end{equation}
	for all $x, y \in X .$ Let $f: X^{2} \rightarrow Y$ be a mapping satisfying satisfying (\ref{equ(2.3)}) and $f(x, 0)=$ $f(0, z)=0$ for all $x, z \in X .$ Then there exists a unique additive-quadratic mapping $F: X^{2} \rightarrow Y$ such that
	$$
	\|f(x, z)-F(x, z)\| \leqslant \min \left\{\frac{1}{2^\beta(1-L)} \varphi(x, x) \varphi(z, 0), \frac{1}{4^\beta(1-L)} \varphi(x, 0) \varphi(z, z)\right\}
	$$
	for all $x, z \in X$
\end{theorem}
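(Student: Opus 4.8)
The plan is to run the fixed point scheme of Theorem \ref{theorem 3.1} verbatim, but with the direction of the contraction reversed. Condition (\ref{equ(3.7)}) now controls $\varphi$ under \emph{doubling} rather than halving, so I would realize the two component mappings as limits of $\frac{1}{2^{k}}f(2^{k}x,z)$ and $\frac{1}{4^{k}}f(x,2^{k}z)$ instead of $2^{k}f(\frac{x}{2^{k}},z)$ and $4^{k}f(x,\frac{z}{2^{k}})$. Concretely, I would take the complete generalized metric space $(S,d)$ with $S=\{h:X^{2}\to Y:h(x,0)=h(0,z)=0\}$ and $d(g,h)=\inf\{\mu\in\mathbb{R}_{+}:\|g(x,z)-h(x,z)\|\leq\mu\,\varphi(x,x)\varphi(z,0)\ \forall x,z\}$, and define $Jg(x,z):=\frac{1}{2}g(2x,z)$.

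First I would extract the basic estimate by setting $w=0,\ y=x$ in (\ref{equ(2.3)}), obtaining $\|f(2x,z)-2f(x,z)\|\leq\varphi(x,x)\varphi(z,0)$, whence $\beta$-homogeneity gives $\|f(x,z)-\frac12 f(2x,z)\|\leq\frac{1}{2^{\beta}}\varphi(x,x)\varphi(z,0)$, i.e. $d(f,Jf)\leq\frac{1}{2^{\beta}}$. For the contraction constant, if $d(g,h)=\varepsilon$ then $\beta$-homogeneity yields $\|Jg(x,z)-Jh(x,z)\|\leq\frac{1}{2^{\beta}}\varepsilon\,\varphi(2x,2x)\varphi(z,0)$, and applying the first inequality of (\ref{equ(3.7)}) at $x\mapsto 2x$ gives $\varphi(2x,2x)\leq 2^{\beta}L\,\varphi(x,x)$, so that $d(Jg,Jh)\leq L\,d(g,h)$. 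Theorem \ref{theorem 1.3} then supplies a unique fixed point $P(x,z)=\lim_{k\to\infty}\frac{1}{2^{k}}f(2^{k}x,z)$ with $d(f,P)\leq\frac{1}{1-L}d(f,Jf)\leq\frac{1}{2^{\beta}(1-L)}$, giving the first bound in the claimed minimum. Passing to the limit in (\ref{equ(2.3)}) along $\frac{1}{2^{n}}f(2^{n}\,\cdot\,,\,\cdot\,)$ and invoking Lemma \ref{lemma 2.1} shows $P$ is additive in the first variable and quadratic in the second, exactly as in Theorem \ref{theorem 2.4}. The analogous argument with $d'(g,h)=\inf\{\mu:\|g-h\|\leq\mu\,\varphi(x,0)\varphi(z,z)\}$ and $J'g(x,z):=\frac{1}{4}g(x,2z)$, using $y=0,\ w=z$ in (\ref{equ(2.3)}) and the second inequality of (\ref{equ(3.7)}) in the form $\varphi(2z,2z)\leq 4^{\beta}L\,\varphi(z,z)$, produces a unique additive-quadratic fixed point $Q(x,z)=\lim_{k\to\infty}\frac{1}{4^{k}}f(x,2^{k}z)$ with $\|f(x,z)-Q(x,z)\|\leq\frac{1}{4^{\beta}(1-L)}\varphi(x,0)\varphi(z,z)$, the second bound.

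The step I expect to be the only delicate one is the identification $P=Q$, where the interaction between the scalar factor coming from additivity and the factor coming from $\beta$-homogeneity must be tracked carefully. Since $Q$ is additive in its first variable, $Q(2^{n}x,z)=2^{n}Q(x,z)$, hence $\frac{1}{2^{n}}Q(2^{n}x,z)=Q(x,z)$ for every $n$; therefore
\[
\Bigl\|\tfrac{1}{2^{n}}f(2^{n}x,z)-Q(x,z)\Bigr\|=\tfrac{1}{2^{n\beta}}\bigl\|f(2^{n}x,z)-Q(2^{n}x,z)\bigr\|\leq\tfrac{1}{2^{n\beta}}\cdot\tfrac{1}{4^{\beta}(1-L)}\varphi(2^{n}x,0)\varphi(z,z).
\]
Iterating (\ref{equ(3.7)}) gives $\varphi(2^{n}x,0)\leq(2^{\beta}L)^{n}\varphi(x,0)$, so the right-hand side is at most $\frac{L^{n}}{4^{\beta}(1-L)}\varphi(x,0)\varphi(z,z)\to 0$ because $L<1$; letting $n\to\infty$ yields $\|P(x,z)-Q(x,z)\|=0$. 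Setting $F:=P=Q$ then gives an additive-quadratic map satisfying the minimum bound. The main obstacle is thus purely bookkeeping: one must distinguish the factor $2^{n}$ (from additivity, which cancels $\frac{1}{2^{n}}$ exactly) from the factor $2^{n\beta}$ (from the norm), and combine the latter with the geometric growth of $\varphi$ to force the error to vanish.
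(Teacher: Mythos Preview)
Your proposal is correct and follows essentially the same route as the paper's own proof: both work on the same generalized metric spaces $(S,d)$ and $(S,d')$, take $Jg(x,z)=\tfrac12 g(2x,z)$ and (what the paper intends) $J'g(x,z)=\tfrac14 g(x,2z)$, obtain $d(f,Jf)\le\tfrac{1}{2^\beta}$ and $d'(f,J'f)\le\tfrac{1}{4^\beta}$ from the substitutions $w=0,\,y=x$ and $y=0,\,w=z$, and then invoke the fixed point alternative together with Lemma~\ref{lemma 2.1} and the $P=Q$ identification via additivity in the first variable. In fact you supply details the paper only gestures at (the contraction estimate for $J$ and $J'$, and the explicit $L^{n}$ decay in the $P=Q$ step), and your definition of $J'$ corrects an evident typo in the paper's proof.
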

\begin{proof}
	Consider the complete metric spaces $(S, d)$ and $\left(S, d^{\prime}\right)$ given in the proof of Theorem \ref{theorem 3.1} .

	Now we consider the linear mapping $J: S \rightarrow S$ such that
	$$
	J g(x, z):=\frac{1}{2} g(2 x, z)
	$$
	for all $x, z \in X .$ It follows from (\ref{equ(3.3)}) that
	$$
	\left\|f(x, z)-\frac{1}{2} f(2 x, z)\right\| \leqslant \frac{1}{2^\beta} \varphi(x, x) \varphi(z, 0)
	$$
	for all $x, z \in X .$ So $d(f, J f) \leqslant \frac{1}{2^\beta}$.

	By the same reasoning as in the proof of Theorem $\ref{theorem 2.2},$ one can show that there exists a unique additive-quadratic mapping $P: X^{2} \rightarrow Y$ such that
	$$
	\|f(x, z)-P(x, z)\| \leqslant \frac{1}{2^\beta(1-L)} \varphi(x, x) \varphi(z, 0)
	$$
	for all $x, z \in X$.

	By the same reasoning as in the proof of Theorem $\ref{theorem 2.2},$ one can show that the mapping $P: X^{2} \rightarrow Y$ is additive in the first variable and quadratic in the second variable.

	Now we consider the linear mapping $J^{\prime}: S \rightarrow S$ such that
	$$
	J^{\prime} g(x, z):=4 g\left(\frac{x}{2}, z\right)
	$$
	for all $x, z \in X.$ It follows from (\ref{equ(3.5)}) that
	$$
	\left\|f(x, z)-\frac{1}{4} f(x, 2 z)\right\| \leqslant \frac{1}{4^\beta} \varphi(x, 0) \varphi(z, z)
	$$
	for all $x, z \in X .$ So $d^{\prime}\left(f, J^{\prime} f\right) \leqslant \frac{1}{4^\beta}$.

	By the same reasoning as in the proof of Theorem $\ref{theorem 2.2},$ one can show that there exists a unique additive-quadratic mapping $Q: X^{2} \rightarrow Y$ such that
	$$
	\|f(x, z)-Q(x, z)\| \leqslant \frac{1}{4^\beta(1-L)} \varphi(x, 0) \varphi(z, z)
	$$
	for all $x, z \in X.$

	By the same reasoning as in the proof of Theorem $\ref{theorem 2.2},$ one can show that the mapping $Q: X^{2} \rightarrow Y$ is additive in the first variable and quadratic in the second variable.

	By the same reasoning as in the proof of Theorem $\ref{theorem 2.2},$ we get $\|P(x, z)-Q(x, z)\|=$ $0,$ i.e., $F(x, z):=P(x, z)=Q(x, z)$ for all $x, z \in X .$ Thus there is an additive-quadratic mapping $F: X^{2} \rightarrow Y$ such that
	$$
	\|f(x, z)-F(x, z)\| \leqslant \min \left\{\frac{1}{2^\beta(1-L)} \varphi(x, x) \varphi(z, 0), \frac{1}{4^\beta(1-L)} \varphi(x, 0) \varphi(z, z)\right\}
	$$
	for all $x, z \in X$.
\end{proof}
\begin{corollary}
	Let $X,Y$ be $\beta$-homogeous $F$-spaces, $r<1$ and $\theta$ be nonnegative real numbers and $f: X^{2} \rightarrow Y$ be a mapping satisfying (\ref{equ(2.10)}) and $f(x, 0)=f(0, z)=0$ for all $x, z \in X .$ Then there exists a unique additive-quadratic mapping $F: X^{2} \rightarrow Y$ such that
	$$
	\|f(x, z)-F(x, z)\| \leqslant \frac{2\theta}{4^\beta-2^{r\beta}}\|x\|^{r}\|z\|^{r}
	$$
	for all $x, z \in X .$
\end{corollary}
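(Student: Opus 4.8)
The plan is to specialize Theorem~\ref{theorem 3.3} to the control function $\varphi(x,y):=\sqrt{\theta}\,(\|x\|^{r}+\|y\|^{r})$, exactly in the spirit of the corollary following Theorem~\ref{theorem 2.4}. First I would record the two identities that drive the whole computation. Since the $F$-norm on $X$ is $\beta$-homogeneous, $\|x/2\|=2^{-\beta}\|x\|$, hence $\|x/2\|^{r}=2^{-r\beta}\|x\|^{r}$, and therefore
\[
\varphi\!\left(\tfrac{x}{2},\tfrac{y}{2}\right)=2^{-r\beta}\,\varphi(x,y),\qquad
\varphi(x,x)\,\varphi(z,0)=\varphi(x,0)\,\varphi(z,z)=2\theta\,\|x\|^{r}\|z\|^{r}.
\]
The first identity is what lets me check the contractivity hypothesis (\ref{equ(3.7)}) and pin down the relevant Lipschitz constants, while the second identity collapses both slots of the minimum in the conclusion of Theorem~\ref{theorem 3.3} to the common expression $2\theta\,\|x\|^{r}\|z\|^{r}$.

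Next I would run the two fixed-point arguments of Theorem~\ref{theorem 3.3} on the complete metric spaces $(S,d)$ and $(S,d')$, each with its own sharp contraction constant. For the additive reduction, the map $Jg(x,z)=\tfrac12 g(2x,z)$ satisfies $d(Jg,Jh)\le L_{1}\,d(g,h)$ with $L_{1}=2^{(r-1)\beta}$, coming directly from $\varphi(2x,2x)=2^{r\beta}\varphi(x,x)$; since $r<1$ we have $L_{1}<1$, so Theorem~\ref{theorem 1.3} applies and, together with $d(f,Jf)\le 2^{-\beta}$, yields $\|f(x,z)-P(x,z)\|\le \frac{1}{2^{\beta}(1-L_{1})}\varphi(x,x)\varphi(z,0)$. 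For the quadratic reduction, the map $J'g(x,z)=\tfrac14 g(x,2z)$ satisfies $d'(J'g,J'h)\le L_{2}\,d'(g,h)$ with $L_{2}=2^{(r-2)\beta}<1$, and with $d'(f,J'f)\le 4^{-\beta}$ this gives $\|f(x,z)-Q(x,z)\|\le \frac{1}{4^{\beta}(1-L_{2})}\varphi(x,0)\varphi(z,z)$. That $P=Q=:F$ is additive in the first variable and quadratic in the second is inherited verbatim from Theorem~\ref{theorem 3.3}.

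Finally I would insert the second identity and the values of $L_{1},L_{2}$, using $2^{\beta}(1-L_{1})=2^{\beta}-2^{r\beta}$ and $4^{\beta}(1-L_{2})=4^{\beta}-2^{r\beta}$, so that the two estimates become $\tfrac{2\theta}{2^{\beta}-2^{r\beta}}\|x\|^{r}\|z\|^{r}$ and $\tfrac{2\theta}{4^{\beta}-2^{r\beta}}\|x\|^{r}\|z\|^{r}$ respectively. Since $\beta>0$ gives $4^{\beta}>2^{\beta}$, hence $4^{\beta}-2^{r\beta}>2^{\beta}-2^{r\beta}>0$, the minimum of the two is the second quantity, which is precisely the claimed bound. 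The one point that needs care—and the only real obstacle—is that the additive and quadratic halves force the two \emph{different} contraction constants $L_{1}=2^{(r-1)\beta}$ and $L_{2}=2^{(r-2)\beta}$; this is legitimate precisely because the two halves are carried out on the independent metric spaces $(S,d)$ and $(S,d')$, so Theorem~\ref{theorem 1.3} may be invoked with its own $L_{i}<1$ in each, and one must verify that the minimum retains the sharper (quadratic) bound $\tfrac{2\theta}{4^{\beta}-2^{r\beta}}\|x\|^{r}\|z\|^{r}$.
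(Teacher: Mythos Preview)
Your argument is correct. It is essentially the route the paper intends, but you carry it out more carefully than the paper does: the paper simply invokes Theorem~\ref{theorem 3.3} with the single constant $L=2^{\beta(r-2)}$ and $\varphi(x,y)=\sqrt{\theta}(\|x\|^{r}+\|y\|^{r})$, then takes the minimum. Your version instead re-enters the proof of Theorem~\ref{theorem 3.3} and runs the two fixed-point steps on $(S,d)$ and $(S,d')$ with their own sharp constants $L_{1}=2^{(r-1)\beta}$ and $L_{2}=2^{(r-2)\beta}$.

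This extra care is not cosmetic. With $\varphi(x/2,y/2)=2^{-r\beta}\varphi(x,y)$, the first inequality in~(\ref{equ(3.7)}) reads $1\le 2^{(1-r)\beta}L$, i.e.\ $L\ge 2^{(r-1)\beta}$; the paper's choice $L=2^{(r-2)\beta}$ does \emph{not} satisfy this (for $\beta>0$), so Theorem~\ref{theorem 3.3} cannot be applied as a black box with that $L$. Conversely, if one uses the admissible single constant $L=2^{(r-1)\beta}$, the quadratic half only yields the weaker bound $\tfrac{2\theta}{4^{\beta}-2^{(r+1)\beta}}\|x\|^{r}\|z\|^{r}$, not the stated $\tfrac{2\theta}{4^{\beta}-2^{r\beta}}\|x\|^{r}\|z\|^{r}$. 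Your observation that the additive and quadratic halves live on independent generalized metric spaces and may therefore carry different Lipschitz constants is exactly what is needed to recover the stated bound, and your final comparison $4^{\beta}-2^{r\beta}>2^{\beta}-2^{r\beta}>0$ correctly identifies the minimum.
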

\begin{proof}
	The proof follows from Theorem \ref{theorem 3.3} by taking $L=2^{\beta(r-2)}$ and $\varphi(x, y)=$ $\sqrt{\theta}\left(\|x\|^{r}+\|y\|^{r}\right)$ for all $x, y \in X,$ since
	$$\min \left\{\frac{2 \theta}{4^\beta-2^{r\beta}}\|x\|^{r}\|z\|^{r}, \frac{2^{\beta+1}\theta}{4^\beta-2^{r\beta}}\|x\|^{r}\|z\|^{r}\right\}=\frac{2 \theta}{4^\beta-2^{r\beta}}\|x\|^{r}\|z\|^{r}$$
	for all $x, z \in X$.
\end{proof}

\noindent \\[4mm]

\noindent\bf{\footnotesize Acknowledgements}\quad\rm
{\footnotesize This work was supported by the National Natural Science Foundation of P. R. China (Nos. 11971493 and 12071491).}\\[4mm]

\end{document}